\documentclass[psamsfonts]{amsart}
\usepackage{amssymb}
\usepackage{graphicx,color}
\usepackage{amssymb,amsfonts,amsthm}
\usepackage[all,arc]{xy}
\usepackage{enumerate}
\usepackage{mathrsfs}

\newtheorem{theorem}{Theorem}[section]
\newtheorem{lemma}[theorem]{Lemma}
\newtheorem{definition}[theorem]{Definition}
\newtheorem{remark}[theorem]{Remark}
\numberwithin{equation}{section}

\title[]{Interior Hessian estimates for Hessian quotient equations in dimension three}

\author{Heming Jiao}

\address{School of Mathematics and Institute for Advanced Study in Mathematics, Harbin Institute of Technology,
         Harbin, Heilongjiang 150001, China}
\email{jiao@hit.edu.cn}

\author{Zhenan Sui}

\address{Institute for Advanced Study in Mathematics, Harbin Institute of Technology, Harbin, Heilongjiang 150001, China}
\email{20170045@hit.edu.cn}

\thanks{The first author is supported by the National Natural Science Foundation of China (Grant No. 12271126). The second author is supported by the National Natural Science Foundation of China (Grant No. 12571212)}

\begin{document}

\begin{abstract}
In this paper, we establish the interior Hessian estimates for $2$-convex solutions to $\frac{\sigma_2}{\sigma_1} (D^2 u) = \psi (x,u)$ in dimension three.
In higher dimensions ($n \geq 4$), we prove the interior Hessian estimates for semi-convex solutions.
We provide a new method to prove the doubling inequality for smooth solutions in dimensions three and four. In higher dimensions ($n\geq 5$)
the doubling inequality is
proved under an additional dynamic semi-convexity condition which is the same to that in \cite{SY2025}. The method also applies to the equation $\sigma_2 (D^2 u) = \psi (x, u, \nabla u)$.

\noindent{Keywords:} Interior Hessian estimates; Hessian quotient equations; Semi-convex solutions; Doubling inequality.
\end{abstract}

%\subjclass[2010]{Primary 53C21; Secondary 35J65, 58J32}

\maketitle

%\tableofcontents

\section {\large Introduction}

\vspace{4mm}

This paper is devoted to interior Hessian estimate for Hessian quotient equation
\begin{equation}
\label{Hessian}
F (D^2 u) := \bigg( \frac{\sigma_2}{\sigma_1} \bigg) (D^2 u) = \bigg( \frac{\sigma_2}{\sigma_1} \bigg) \Big( \lambda( D^2 u) \Big) = \psi (x, u),
\end{equation}
where $D^2 u$ is the Hessian of $u$, $\lambda = (\lambda_1, \cdots, \lambda_n)$ are the eigenvalues of $D^2 u$,
\[ \sigma_k (\lambda) = \sum_{1 \leq i_1 < \cdots < i_k \leq n} \lambda_{i_1} \cdots \lambda_{i_k} \]
is the $k$-th elementary symmetric function defined on $k$-th G\r arding's cone
\[ \Gamma_k = \big\{ \lambda \in \mathbb{R}^n \big\vert \sigma_j (\lambda) > 0, \ \ j = 1, \ldots, k \big\}, \]
and $\psi$ is a prescribed positive function defined on $B_1 (0) \times \mathbb{R}$. Here $B_1 (0)$ denotes the open ball centered at $0$ with radius $1$.

A function $u \in C^2$ is called $k$-convex if $\lambda(D^2 u) \in \Gamma_k$, for $k=1, \ldots ,n$.
Our main result is the following interior Hessian estimates.

\begin{theorem}
\label{thm1}
Let $u \in C^4 (B_1)$ be a $2$-convex solution to the equation \eqref{Hessian} in $B_1 \subset \mathbb{R}^3$ satisfying $\|u\|_{C^1(B_1(0))} \leq M < \infty$. Then
\begin{equation}
\label{interior}
|D^2 u (0)| \leq C,
\end{equation}
where $C$ is a positive constant depending only on $\|\psi\|_{C^{1,1}}$, $\|\frac{1}{\psi}\|_{L^\infty}$ and $\|u\|_{C^1 (B_1(0))}$.
\end{theorem}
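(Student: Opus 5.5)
The plan follows the Shankar--Yuan strategy for $\sigma_2=1$ in dimension three: a Jacobi inequality, then a doubling inequality, then an integral (mean value) argument. In what follows $\lambda_1\ge\lambda_2\ge\lambda_3$ are the eigenvalues of $D^2u$ and $b=\ln\lambda_1$ (or a smooth approximation of it).

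\textbf{Step 1: Jacobi inequality.} I first rewrite \eqref{Hessian} as $\sigma_2(D^2u)=\psi\,\sigma_1(D^2u)$ and linearize with $F^{ij}=\partial F/\partial u_{ij}$. In dimension three, $2$-convexity together with $\sigma_2=\psi\sigma_1$ yields two structural facts:
\begin{itemize}
\item $\lambda_3\ge -C$ when $\lambda_1$ is large, i.e.\ automatic semi-convexity up to lower-order terms, since $\sigma_2/\sigma_1\ge c\lambda_2$-type bounds control the negative eigenvalue;
\item the concavity of $F$ yields good third-order terms.
\end{itemize}
I will show that, wherever $\lambda_1\gg1$,
\[
F^{ij}b_{ij}\ \ge\ \varepsilon F^{ij}b_ib_j-C .
\]
This is a direct computation. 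I differentiate the equation twice in the $e_1$ direction and use $\psi\in C^{1,1}$ to absorb the $x,u$-dependence, which is harmless because $|Du|\le M$. The negative term $F^{ij,kl}u_{ij1}u_{kl1}$ is handled via the quotient-concavity inequality.

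\textbf{Step 2: Doubling inequality.} This step uses the new method announced in the abstract for $n=3,4$. I aim for
\[
\sup_{B_{r}}\lambda_1\le C\sup_{B_{r/2}}\lambda_1 \qquad\text{on small balls.}
\]
I would try a maximum principle applied to a test function of the form $\rho^\beta(x)\,b+\tfrac{a}{2}|x|^2+\gamma u$, with cutoff $\rho=r^2-|x|^2$. The Jacobi inequality absorbs the gradient terms, and I would use the dimension-three semi-convexity bound $\lambda_3\ge-C$ to compare $\sum F^{ii}$ with $F^{11}$. That comparison is what makes the cutoff derivative terms controllable.

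\textbf{Step 3: Integral estimate and conclusion.} The Jacobi inequality shows that $b$ is a subsolution of a uniformly elliptic-type operator after the doubling normalization. A local maximum principle then gives
\[
b(0)\le C\int_{B_{1/2}}b+C .
\]
I would then bound $\int b$ by $\int\Delta u$ (up to constants, since $b\le\ln(C+\Delta u)$). By the divergence theorem together with $\|u\|_{C^1}\le M$, the integral $\int\Delta u$ is controlled by $M$; this requires $\Delta u>0$, which holds since $\lambda\in\Gamma_2$.

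The main obstacle is Step 1, specifically controlling the third-derivative terms $F^{ij,kl}$ for the quotient $\sigma_2/\sigma_1$ when $\lambda_3<0$. The quotient's concavity is weaker than that of $\sigma_2^{1/2}$. My first attempt would be to express everything through $G=\sigma_2-\psi\sigma_1$, whose linearization $\sigma_2^{ij}-\psi\delta_{ij}$ is still elliptic, and then reuse the known $\sigma_2$ Jacobi computation with the extra linear term treated as lower order.
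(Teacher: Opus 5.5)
Your plan has genuine gaps, beginning with the structural inputs for Steps 1--2. The claim that $2$-convexity and $\sigma_2=\psi\sigma_1$ force $\lambda_3\ge -C$ once $\lambda_1$ is large is false. Take $\lambda=(L,L,-t)$ with $t=\frac{L^2-2\psi L}{2L-\psi}\approx L/2$: it satisfies $\sigma_2=\psi\sigma_1$ and lies in $\Gamma_2$ (e.g.\ the quadratic $\frac{L}{2}(x_1^2+x_2^2)-\frac{t}{2}x_3^2$ with constant $\psi$). In $\mathbb{R}^3$, $\Gamma_2$ gives only the ratio bound $\lambda_3>-\sigma_1/3$. Semi-convexity is exactly the extra hypothesis the paper must impose in Theorem~\ref{thm2}.

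Nor is $F^{11}$ comparable to $\sum_i F^{ii}$. When $\lambda_2,\lambda_3$ stay bounded, $f_1\sim\lambda_1^{-2}$ while $\sum_i f_i\approx 2$; Lemma~\ref{js-lem1} gives only $f_1\lambda_1^2\ge\frac{2}{n^2}f^2$. Correspondingly, $\sigma_2^{11}-\psi=\lambda_2+\lambda_3-\psi=O(\lambda_1^{-1})$ in that regime. So the $\psi\sigma_1$ term is not lower order, and the $\sigma_2$ Jacobi computation cannot simply be reused.

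The paper never uses a Jacobi inequality. In Theorem~\ref{doubling} it takes the test function $\rho^\alpha e^{a(x\cdot\nabla u-u)+b|\nabla u|^2/2}\log\max\{\Delta u/M_1,\gamma\}$. It then computes, by Lagrange multipliers, the exact minimum of the third-order quantity $Q_i$ under two linear constraints: the differentiated equation and the critical-point equation. The only structural input is $\lambda_{\min}\ge -c_n\Delta u$, which holds automatically for $n=3$ because $c_3=(\sqrt{28}-2)/6>1/3$.

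Step 3 also fails. The linearized operator $F^{ij}\partial_{ij}$ is not uniformly elliptic: by the computation above, its ellipticity ratio $f_3/f_1$ can be of order $\lambda_1^2$. A doubling inequality controls the size of $\Delta u$ from one scale to the next, not the ratio of the eigenvalues of $F^{ij}$. So no local maximum principle yields $b(0)\le C\int_{B_{1/2}}b+C$. Even for $\sigma_2=1$ in dimension three, Warren--Yuan needed the minimal surface structure of the gradient graph, and the mean value inequality available there, for this step.

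The missing idea is the compactness argument of Section 4.
\begin{enumerate}
\item Argue by contradiction with solutions $u_k$ having bounded $C^1$ norms and $|D^2u_k(0)|\to\infty$, and pass to a Lipschitz $2$-convex viscosity limit $u$.
\item Use Chaudhuri--Trudinger ($k=2>n/2$ when $n=3$) to find a point $y$ where $u$ is twice differentiable.
\item Apply Savin's small perturbation theorem, in Fan's generalization, to the rescalings $r^{-2}(u_k-Q)(r\bar x+y)$. This gives $\Delta u_k\le C$ on a small ball around $y$, uniformly in $k$.
\item Iterate the doubling inequality finitely many times to carry this bound to a ball containing $0$.
\end{enumerate}
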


In higher dimensions, we prove the interior Hessian estimates under an additional condition that the solution $u$ is semi-convex, namely,
there exists a positive constant $A$ such that $u + A|x|^2$ is convex.
\begin{theorem}
\label{thm2}
Let $u \in C^4 (B_1)$ be a $2$-convex solution to the equation \eqref{Hessian} in $B_1 \subset \mathbb{R}^n$ satisfying $\|u\|_{C^1(B_1(0))} \leq M < \infty$. If,
in addition, $u$ is semi-convex, then
\begin{equation}
\label{interior'}
|D^2 u (0)| \leq C,
\end{equation}
where $C$ is a positive constant depending only on $\|\psi\|_{C^{1,1}}$, $\|\frac{1}{\psi}\|_{L^\infty}$ and $\|u\|_{C^1 (B_1(0))}$.
\end{theorem}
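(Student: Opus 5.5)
Under the semi-convexity assumption (u + A|x|^2 convex), every eigenvalue of D^2 u satisfies \lambda_i \geq -2A. So it suffices to bound the largest eigenvalue \lambda_1 at the origin. I would use a Pogorelov-type maximum principle argument, in the spirit of Guan--Qiu and Shankar--Yuan, with a cutoff.

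\textbf{Test function.} Take a cutoff \eta = (1-|x|^2)^{\beta} (or the Chou--Wang variant with u), and consider
\[
Q = \eta\, b(\lambda_1) + \frac{a}{2}|Du|^2 + b_0\, x\cdot Du - u ,\qquad b(\lambda_1)=\log \lambda_1 ,
\]
where \lambda_1 is smoothed near multiplicity by a standard perturbation. At an interior maximum x_0, I rotate coordinates so that D^2u(x_0) is diagonal. I then compute F^{ii}Q_{ii} \le 0, differentiating the equation twice and using concavity of (\sigma_2/\sigma_1)^{1/?}. This produces the usual terms:
\[
-\frac{F^{pq,rs}u_{pq1}u_{rs1}}{\lambda_1}+\sum_{k>1}\frac{2F^{ii}u_{1ki}^2}{\lambda_1(\lambda_1-\lambda_k)}-\frac{F^{ii}u_{11i}^2}{\lambda_1^2}.
\]

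\textbf{Structural inputs.}
\begin{itemize}
\item The lower bound \lambda_i\ge -2A gives F^{ii} comparable for i\ge2. It also gives F^{11}\ge c/\lambda_1^{?}, via explicit formulas for \partial(\sigma_2/\sigma_1).
\item The term \sum_i F^{ii} \ge c>0 controls the a|Du|^2 term, since F^{ii}u_{ii}^2 \ge c\lambda_1 F^{11}\lambda_1.
\item The gradient terms a F^{ii}u_{ii}^2 then dominate the third-derivative error terms.
\end{itemize}

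\textbf{Main obstacle.} The hard step is absorbing -F^{11}u_{111}^2/\lambda_1^2. I would split into two cases.
\begin{itemize}
\item If \lambda_2 \ge \delta\lambda_1, the concavity term -F^{pq,rs}u_{pq1}u_{rs1} supplies enough. This uses the semi-convexity-driven inequality \sigma_1 F^{11}\ge c\, F^{ii} for i\ge2, plus the critical-point relation \eta u_{11i}/\lambda_1 = -(\dots)_i to rewrite u_{11i}.
\item If \lambda_2 < \delta\lambda_1, the eigenvalues are all bounded below and the structure is close to one large eigenvalue. Then F^{11}\sim \lambda_1^{-2}\sigma_2\cdot(\cdot), and the third-order term is handled by the quotient concavity (Lin--Trudinger type inequality) together with the bad-term bound from \eta_i.
\end{itemize}

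\textbf{Conclusion.} Combining the two cases gives \eta\lambda_1 \le C at x_0. Hence \lambda_1(0)\le C, and together with \lambda_i\ge -2A this bounds |D^2u(0)|. The constant depends on \psi through \|\psi\|_{C^{1,1}}, which enters when differentiating the right-hand side twice, and through \|1/\psi\|_{L^\infty}, which ensures F^{ii} is not degenerate.
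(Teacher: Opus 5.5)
There is a genuine gap. The concluding step, ``combining the two cases gives $\eta\lambda_1\le C$ at $x_0$'', does not follow, because nothing in your computation produces a positive term that grows with $\lambda_1$.

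First, semi-convexity plus the equation already bounds $\lambda_2$. If $\lambda_2\ge K$, every product $\lambda_i\lambda_j$ other than $\lambda_1\lambda_2$ is $\ge -2A\lambda_1$. So $\sigma_2\ge (K-n^2A)\lambda_1$ while $\sigma_1\le n\lambda_1$, forcing $\psi\ge (K-n^2A)/n$. Your case $\lambda_2\ge\delta\lambda_1$ is therefore vacuous for large $\lambda_1$, and everything happens with a single large eigenvalue.

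In that regime $f_i=1-(\lambda_i+\psi)/\Delta u\approx 1$ for $i\ge 2$, and $\sum_i f_i=n-1-n\psi/\Delta u$. Moreover $f_1\lambda_1^2$ is trapped between $\frac{2}{n^2}\psi^2$ (Lemma \ref{js-lem1}) and a constant. In particular $\sigma_1F^{11}\sim\lambda_1^{-1}$, so the inequality $\sigma_1F^{11}\ge cF^{ii}$ you invoke is false. Hence the gradient term yields only $a\sum_i f_i\lambda_i^2=O(1)$. Any Jacobi-type term $\epsilon\sum_i f_i(\log\lambda_1)_i^2$, after substituting the critical equation, is also $O(1)$ when $\nabla\eta$ is small, since $f_1\lambda_1^2=O(1)$ and $\lambda_i=O(1)$ for $i\ge 2$.

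The cutoff, however, contributes $F^{ii}\eta_{ii}\log\lambda_1$. At the center, where $\nabla\eta=0$ and $D^2\eta=-2\beta I$, this equals $-2\beta\sum_i f_i\log\lambda_1\approx-2\beta(n-1)\log\lambda_1$. With a multiplicative cutoff $\beta\log\rho$ you are left instead with $-C\beta/\rho$ against an $O(1)$ good term. Either way, $F^{ii}Q_{ii}\le 0$ at $x_0$ is consistent with arbitrarily large $\lambda_1$. The true obstruction is the cutoff, not the term $F^{11}u_{111}^2/\lambda_1^2$ you single out.

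The paper does not attempt a direct bound. Its maximum principle argument, for $W=\rho^\alpha\exp\{a(x\cdot\nabla u-u)+\frac{b}{2}|\nabla u|^2\}\log(\Delta u/M_1)$ with $M_1=\sup_{B_1}\Delta u$, only proves the doubling inequality of Theorem \ref{doubling}. The $\log\log$ structure produces the good term $\frac{\epsilon_1}{2}Uf_iA_i^2$, which carries the large factor $U=\log(\Delta u/M_1)$. Here $\epsilon_1>0$ comes from the exact minimum, computed by Lagrange multipliers, of the third-order quadratic form under the two linear constraints. This is valid because semi-convexity gives $\lambda_{\min}\ge -c_n\Delta u$ where $\Delta u$ is large. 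The normalization forces $x_0\notin B_1$, so some $|x_j|\ge 1/\sqrt{n}$. Through $ax_j\lambda_j$ or $\alpha\rho_j/\rho$ in $A_j$, together with Lemma \ref{js-lem1}, this makes the good terms of order $U$ and yields $\rho^2U\le C$. That is a bound relative to $M_1$, not an absolute one.

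The absolute bound then comes from compactness. A blow-up sequence $u_k$ converges to a semi-convex viscosity solution, which is twice differentiable a.e.\ by Alexandrov's theorem. Near such a point $y$, Savin's small perturbation theorem (in Fan's version) gives $C^{2,\alpha}$ bounds for $u_k$, uniform in $k$, on a small ball. Iterating the doubling inequality carries these bounds to $B_1(y)\ni 0$. Your proposal contains neither the doubling mechanism nor this regularity input, and without something of this kind the pointwise computation you outline cannot close.
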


Interior $C^2$ estimate is one of the fundamental estimates for fully nonlinear elliptic equations. One typical application is to find smooth solution on noncompact domains combined with Evans-Krylov interior $C^{2, \alpha}$ estimate. For $k$-Hessian equations
\begin{equation} \label{k_Hessian}
\sigma_k (D^2 u) = \psi(x, u),
\end{equation}
the counterexamples by Pogorelov \cite{Pogorelov1978} and Urbas \cite{Urbas1990} show that there is no interior $C^2$ estimate for \eqref{k_Hessian} when $k \geq 3$. When $k = 1$, \eqref{k_Hessian} reduces to semi-linear elliptic equation. When $k = 2$, interior $C^2$ estimate has become a longstanding problem.

The first result to $\sigma_2$ equation
\begin{equation} \label{2-equation}
\sigma_2 (D^2 u) = \psi(x, u, \nabla u)
\end{equation}
with $n = 2$ is given by Heinz \cite{Heinz1959}, using isothermal coordinates under Legendre-Lewy transform. In this special case, the equation is also of Monge-Amp\`ere type. Recent proofs for this special case can be found in Chen, Han and Ou \cite{CHO2016} using PDE method and Liu \cite{Liu2021} using the partial Legendre transform.

For $n \geq 3$, interior $C^2$ estimate to \eqref{2-equation} becomes extremely difficult, which calls for sophisticated analytic or geometric methods. When $\psi \equiv 1$, Warren and Yuan \cite{WY2009} derived interior $C^2$ estimate
for $n = 3$ via the minimal surface structure and a full strength Jacobi inequality.
For general $\psi$ and $n = 3$, interior $C^2$ estimate was obtained by Qiu \cite{Qiu2024-2} (see also \cite{Qiu2024} for prescribed scalar curvature equation). To be more concrete, in dimension $3$, Qiu \cite{Qiu2024-2} gave the first proof of a Jacobi inequality for $\ln \Delta u$:
\[ F^{ij} \nabla_{ij} \ln \Delta u \geq \epsilon F^{ij} (\ln \Delta u)_i (\ln \Delta u)_j, \]
and proposed the methodology of using maximum principle argument to prove a doubling inequality:
\[ \sup\limits_{B_1 (0)} \Delta u \leq C \Big( n, \Vert u \Vert_{C^1 \big( B_2 (0) \big)} \Big) \sup\limits_{B_{1/2}(0)} \Delta u. \]
Recently, Shankar and Yuan \cite{SY2025} proved the interior Hessian estimate for \eqref{2-equation} with $\psi \equiv 1$ and $n = 4$ by synthesizing the ideas of Qiu \cite{Qiu2024-2} with
Chaudhuri-Trudinger \cite{CT05} and Savin \cite{S07}. To be more precise, they proved a powerful almost Jacobi inequality to obtain the doubling inequality when $n = 4$; their method also provides a new proof for $n = 3$ and a Hessian estimate for smooth solutions satisfying a dynamic semi-convexity condition for $n \geq 5$. Fan \cite{Fan25} extended the results in \cite{Qiu2024-2, SY2025} to general $\psi$.
Besides the above listed progress, important breakthroughs are also made by
McGonagle, Song and Yuan \cite{MSY2019}, Guan and Qiu \cite{GQ19}, Shankar and Yuan \cite{SY2020, SY2025}, Mooney \cite{Mooney2021} under certain convexity assumptions. For $n \geq 5$, interior $C^2$ estimate for \eqref{2-equation} is still an open problem.

Very recently, Lu \cite{Lu2025} studied interior $C^2$ estimate for Hessian quotient equation
\begin{equation} \label{Hessian_quotient}
\bigg( \frac{\sigma_k}{\sigma_l} \bigg) (D^2 u) = \psi.
\end{equation}
Lu proved such estimate fails for
\[ 1 \leq l < k \leq n, \ \ k - l \geq 3, \]
while works for $k = n$ and
$l = n - 1$ or $l = n - 2$ by using a special concavity property of positive quotient operators proved by Guan and Sroka \cite{GS25}.
For curvature equation
\[ \bigg( \frac{\sigma_k}{\sigma_{k - 1}} \bigg) (\kappa) = \psi, \]
interior $C^2$ estimate was derived by Sheng, Urbas and Wang \cite{SUW2004}.
An important case of \eqref{Hessian_quotient} is
\begin{equation} \label{3over1}
\bigg( \frac{\sigma_3}{\sigma_1} \bigg) (D^2 u) = \psi.
\end{equation}
For $n = 3, 4$ and $\psi \equiv 1$, interior $C^2$ estimate for \eqref{3over1} was obtained by Chen, Warren and Yuan \cite{CWY2009} and Wang and Yuan \cite{WY2014} using special Lagrangian structure of the equation;
for $n = 3$ by Lu \cite{Lu} via Jacobi inequality and Legendre transform;
for $n = 3, 4$ by Zhou \cite{Zhou2024} utilizing twisted special Lagrangian structure of the equation.
The remaining cases
are to be explored.

Different from the above mentioned techniques, in this paper, we shall use Lagrange multiplier method to compute the concavity of fully nonlinear operator accurately. That is to say, for Hessian equation
\begin{equation} \label{Hessian_general}
F( D^2 u ) := f \Big( \lambda (D^2 u) \Big) = \psi,
\end{equation}
we shall find the extreme value of the quadratic form involving third order terms under the constraint
\[ \sum_j f_j u_{jji} = \psi_i \]
and the critical equation of the test function containing $\Delta u$, which leads to another constraint
\[ \sum_j u_{jji} = A_i, \]
for each fixed $i$.
When the algebraic structure of $f$ is simple, for example, when $f = \sigma_2$ or $f = \frac{\sigma_2}{\sigma_1}$, the extreme value can be computed explicitly. Then, we shall make use of this optimal value of concavity to tackle the difficult terms during the estimation. As a result, we can recover Shankar and Yuan's almost Jacobi inequality \cite{SY2025} in a different form.

Our proof of interior Hessian estimate follows the route of Shankar and Yuan \cite{SY2025}.
The first step is to establish a crucial doubling inequality for second order derivatives of the solution (The reader is referred to \cite{Qiu2024-2} for the original motivation of establishing the doubling inequality).
Our novelty is to replace almost Jacobi inequality \cite{SY2025} by Lagrange multiplier method, and analyze the exact and explicit extreme value of concavity to reach a fine estimation. This methodology may have the potential to be applied to other problems. Our second step is to use the theory of Chaudhuri and Trudinger \cite{CT05} and Savin \cite{S07} to obtain the interior estimate \eqref{interior} as Shankar and Yuan \cite{SY2025} and Fan \cite{Fan25}.

This paper is organized as follows. In Section 2, we present some preliminaries, which may
be used in the following sections. We prove the doubling inequality for $2$-convex solutions in Section 3.
The proof of Theorem \ref{thm1} is given in Section 4. In Section 5, we provide a different proof of the doubling inequality for
$2$-convex solutions to the $2$-Hessian equation \eqref{2-equation}.

As the present paper was finished, we noticed an interesting work of Lu and Sroka \cite{LS2025} where they studied the Liouville's theorem
and interior Hessian estimates for the equation
\begin{equation}
\label{ls}
\left(\frac{\sigma_2}{\sigma_1}\right) (D^2 u) = 1
\end{equation}
namely, the special case of \eqref{Hessian} as $\psi \equiv 1$.
They discovered a relation between the Hessian
quotient operator $\sigma_2/\sigma_1$ and the Hessian operator $\sigma_2$, so that results on the equation
\[
\sigma_2 (D^2 u) = \mbox{constant}
\]
can be applied to study the equation \eqref{ls}.

\vspace{4mm}

\section{Preliminaries}

Throughout the paper, we denote
\[
\sigma_{k;i_1\cdots i_l} (\lambda) = \sigma_k (\lambda)\big|_{\lambda_{i_1} = \cdots = \lambda_{i_l} = 0}
\]
for $1 \leq k \leq n$.
\begin{lemma}
\label{js-lem1}
Let $\lambda = (\lambda_1, \ldots, \lambda_n) \in \Gamma_2$ with $\lambda_1 \geq \cdots \geq \lambda_n$,
\[
f (\lambda) = \frac{\sigma_2}{\sigma_1} (\lambda) \mbox{ and } f_i = \frac{\partial f}{\partial \lambda_i}, \ i =1, \ldots, n.
\]
Then we have
\begin{equation}
\label{js-21}
f_1 \lambda_1^2 \geq \frac{2}{n^2} f^2 (\lambda).
\end{equation}
\end{lemma}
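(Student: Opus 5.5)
Compute $f_1$ explicitly. Since $\sigma_2 = \lambda_1 \sigma_{1;1} + \sigma_{2;1}$ and $\sigma_1 = \lambda_1 + \sigma_{1;1}$, we have
\[
f_1 = \frac{\sigma_{1;1}\sigma_1 - \sigma_2}{\sigma_1^2}
= \frac{\sigma_{1;1}^2 - \sigma_{2;1}}{\sigma_1^2}.
\]
Moreover $\sigma_{1;1}^2 - \sigma_{2;1} = \frac12\big(\sigma_{1;1}^2 + |\lambda'|^2\big)$, where $\lambda' = (\lambda_2,\ldots,\lambda_n)$, because $\sigma_{1;1}^2 = |\lambda'|^2 + 2\sigma_{2;1}$. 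Thus
\[
f_1 \lambda_1^2 = \frac{\lambda_1^2\big(\sigma_{1;1}^2 + |\lambda'|^2\big)}{2\sigma_1^2}.
\]

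The goal is to compare this with $f^2 = \sigma_2^2/\sigma_1^2$, so it suffices to show
\[
\lambda_1^2\big(\sigma_{1;1}^2 + |\lambda'|^2\big) \geq \frac{4}{n^2}\,\sigma_2^2 .
\]
The upper bound on $\sigma_2$ comes from Newton--Maclaurin: $\sigma_2 \le \frac{n-1}{2n}\sigma_1^2$. Since $\lambda\in\Gamma_2\subset\Gamma_1$ and $\lambda_1$ is the largest eigenvalue, $0<\sigma_1 \le n\lambda_1$. Hence
\[
\sigma_2 \le \frac{n-1}{2n}\,\sigma_1\cdot n\lambda_1 = \frac{n-1}{2}\,\lambda_1\sigma_1 .
\]

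It therefore remains to bound $\sigma_1^2$ by $\sigma_{1;1}^2 + |\lambda'|^2$ up to a constant. This is the delicate step, because $\sigma_1 = \lambda_1 + \sigma_{1;1}$ contains $\lambda_1$ itself. I would instead use $\sigma_2 = \lambda_1\sigma_{1;1} + \sigma_{2;1}$ directly:
\[
|\sigma_2| \le \lambda_1|\sigma_{1;1}| + |\sigma_{2;1}|, \qquad
|\sigma_{2;1}| \le \tfrac12\big(\sigma_{1;1}^2 + |\lambda'|^2\big),
\]
where the second bound uses $|\lambda_i\lambda_j| \le \frac12(\lambda_i^2+\lambda_j^2)$ (or the identity above). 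Each $|\lambda_i| \le \lambda_1$ for $i\ge2$; this holds since $\lambda_n \ge -(n-1)\lambda_1$ or better, and in $\Gamma_2$ in fact $|\lambda_n| \le \lambda_1$ up to a dimensional factor. Consequently $|\lambda'|^2 \le (n-1)\lambda_1^2$ roughly, and we get
\[
\sigma_2 \le C_n\,\lambda_1\sqrt{\sigma_{1;1}^2 + |\lambda'|^2}.
\]
Squaring gives the claim with some dimensional constant.

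The main obstacle is obtaining the sharp constant $2/n^2$. For that I would minimize $f_1\lambda_1^2/f^2$ over $\Gamma_2$ normalized by $\lambda_1=1$, checking the extremal configuration. A natural guess is $\lambda=(1,\ldots,1)$, where $f_1 = \frac{n-1}{2n}$ and $f = \frac{n-1}{2}$, so the ratio is $\frac{2}{n(n-1)} \ge \frac{2}{n^2}$; there is slack here. The clean route is to aim for
\[
\frac{\lambda_1^2\big(\sigma_{1;1}^2+|\lambda'|^2\big)}{2} \ge \frac{2}{n^2}\,\sigma_2^2 .
\]
Since $\sigma_2>0$ and $\sigma_2 \le \frac{n-1}{2n}\sigma_1^2$, this reduces to showing $\lambda_1^2(\sigma_{1;1}^2+|\lambda'|^2) \ge c\,\sigma_1^2\sigma_2$ type bounds. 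I would carry this out by splitting into the cases $\sigma_{1;1}\ge \lambda_1$ and $\sigma_{1;1}<\lambda_1$, bounding $\sigma_1 \le 2\lambda_1$ or $\sigma_1\le 2\sigma_{1;1}$ respectively, and tracking constants.
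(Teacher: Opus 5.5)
\textbf{There is a genuine gap: the proposal never proves the inequality with the stated constant.} Your formula for $f_1$ is correct and coincides with the paper, and so is the reduction of the lemma to
\[
\lambda_1^2\bigl(\sigma_{1;1}^2+|\lambda'|^2\bigr)\geq \frac{4}{n^2}\,\sigma_2^2 .
\]
But establishing this with the constant $4/n^2$ is the whole content of the lemma, and you do not do it.

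Write $X=\sigma_{1;1}^2+|\lambda'|^2$. Your triangle-inequality route ($\sigma_2\le\lambda_1|\sigma_{1;1}|+\frac12X$, then $\frac12X\le\frac12\sqrt{K}\,\lambda_1\sqrt X$) gives only an unspecified dimensional constant. As set up, it is too lossy to reach $4/n^2$. Even with the best possible bound $X\le n(n-1)\lambda_1^2$, attained at $(1,\dots,1)$, it yields $\bigl(1+\frac12\sqrt{n(n-1)}\bigr)^{-2}$. That is smaller than $4/n^2$ for every $n\ge2$.

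Your plan for the sharp constant does not work either. The proposed reduction to $\lambda_1^2X\ge c\,\sigma_1^2\sigma_2$ is false for every $c>0$. At $\lambda=(1,\epsilon,\dots,\epsilon)\in\Gamma_2$ one has $\lambda_1^2X=n(n-1)\epsilon^2$, while $\sigma_1^2\sigma_2\approx(n-1)\epsilon$. The underlying problem is that Newton--Maclaurin, $\sigma_2\le\frac{n-1}{2n}\sigma_1^2$, is far from sharp near $(1,0,\dots,0)$. The same example shows that $\sigma_1^2$ cannot be bounded by a multiple of $X$, as you suspected.

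\textbf{The missing ingredient is the inequality $\lambda_1\sigma_{1;1}\ge\frac2n\sigma_2$ on $\Gamma_2$.} This is Lemma 3.1 of Chou--Wang, and it is exactly what the paper uses. With it you simply discard $|\lambda'|^2$:
\[
\lambda_1^2X\ge(\lambda_1\sigma_{1;1})^2\ge\tfrac{4}{n^2}\sigma_2^2 .
\]
You were one step away from it. In $\sigma_2=\lambda_1\sigma_{1;1}+\sigma_{2;1}$, do not bound $\sigma_{2;1}$ by $\frac12X$. Instead use Maclaurin in the $n-1$ variables $\lambda'$:
\[
\sigma_{2;1}=\tfrac12\bigl(\sigma_{1;1}^2-|\lambda'|^2\bigr)\le\tfrac{n-2}{2(n-1)}\sigma_{1;1}^2 .
\]
Combine this with $0<\sigma_{1;1}\le(n-1)\lambda_1$ (positivity holds in $\Gamma_2$) to get $\sigma_2\le\frac n2\lambda_1\sigma_{1;1}$.

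Alternatively, your minimization idea can be finished directly. Normalize $\lambda_1=1$ and fix $s=\sigma_{1;1}$. Then $\sigma_2=s+\frac12(s^2-|\lambda'|^2)$, so the ratio $f_1\lambda_1^2/f^2=X/(2\sigma_2^2)$ increases with $|\lambda'|^2$. It is therefore minimized when $\lambda_2=\dots=\lambda_n=t\in(0,1]$, where it equals $\frac{n}{2(n-1)}\bigl(1+\frac{(n-2)t}{2}\bigr)^{-2}\ge\frac{2}{n(n-1)}$. This even improves the constant.

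Any positive dimensional constant would suffice where the lemma is used, in Case 1 of the doubling-inequality proof. Still, the statement as written claims $2/n^2$, and your proposal does not deliver it.
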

\begin{proof}
We have
\[
\begin{aligned}
f_i = \,& \frac{\sigma_1 \sigma_{1;i} - \sigma_2}{\sigma_1^2} = \frac{(\sigma_{1;i} + \lambda_i) \sigma_{1;i} - (\lambda_i \sigma_{1;i} + \sigma_{2;i})}{\sigma_1^2}\\
 = \,& \frac{\sigma_{1;i}^2 - \sigma_{2;i}}{\sigma_1^2} = \frac{\sum_{j \neq i} \lambda_j^2 + \sigma_{2;i}}{\sigma_1^2}\\
 = \,& \frac{1}{2} \frac{\sum_{j \neq i} \lambda_j^2 + \sigma_{1;i}^2}{\sigma_1^2}.
\end{aligned}
\]
Note that (See c.f. Lemma 3.1 in \cite{CW01}.)
\[
\sigma_{1;1} \lambda_1 \geq \frac{2}{n} \sigma_2.
\]
We obtain
\[
f_1 \lambda_1^2 \geq \frac{1}{2 \sigma_1^2} (\sigma_{1;1}\lambda_1)^2 \geq \frac{2}{n^2} \left(\frac{\sigma_2}{\sigma_1}\right)^2
  = \frac{2}{n^2} f^2
\]
which is \eqref{js-21}.
\end{proof}
\begin{lemma}
\label{js-lem2}
Let $\lambda = (\lambda_1, \ldots, \lambda_n) \in \Gamma_2$ with $\lambda_1 \geq \cdots \geq \lambda_n$. We have
\begin{equation}
\label{js-22}
\begin{aligned}
f_1 \leq \,& \left(\frac{n-1}{n}\right) - \frac{f}{\sigma_1},\\
\left(1-\frac{1}{\sqrt{2}}\right) - \frac{f}{\sigma_1} \leq \,& f_i \leq 2 \left(\frac{n-1}{n}\right) - \frac{f}{\sigma_1}, \ \ i \geq 2.
\end{aligned}
\end{equation}
\end{lemma}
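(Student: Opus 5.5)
The plan is to reduce everything to bounds on the ratios $\lambda_i/\sigma_1$. The key step is the identity
\[
f_i = \frac{\sigma_1 \sigma_{1;i} - \sigma_2}{\sigma_1^2} = \frac{\sigma_{1;i}}{\sigma_1} - \frac{f}{\sigma_1} = 1 - \frac{\lambda_i}{\sigma_1} - \frac{f}{\sigma_1},
\]
which comes from the first line of the computation in the proof of Lemma \ref{js-lem1}, using $\sigma_{1;i} = \sigma_1 - \lambda_i$. With this, the three claimed inequalities are equivalent (recall $\sigma_1>0$ on $\Gamma_2$) to
\[
\lambda_1 \geq \frac{\sigma_1}{n}, \qquad \lambda_i \geq -\frac{n-2}{n}\sigma_1, \qquad \lambda_i \leq \frac{\sigma_1}{\sqrt 2} \quad (i\geq 2).
\]

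The first inequality is immediate. Since $\lambda_1$ is the largest entry, $n\lambda_1 \geq \sigma_1$.

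For the other two I use the single fact available in $\Gamma_2$:
\[
|\lambda|^2 = \sigma_1^2 - 2\sigma_2 < \sigma_1^2 .
\]

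For the upper bound $\lambda_i\le \sigma_1/\sqrt2$ when $i\ge 2$, I split into two cases. If $\lambda_i \leq 0$ there is nothing to prove. If $\lambda_i>0$, then $\lambda_1\geq\lambda_i$ gives
\[
2\lambda_i^2 \leq \lambda_1^2+\lambda_i^2 \leq |\lambda|^2 < \sigma_1^2 .
\]

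For the lower bound, which is actually valid for every $i$, I apply Cauchy--Schwarz to the remaining entries:
\[
\sum_{j\neq i}\lambda_j^2 \geq \frac{\sigma_{1;i}^2}{n-1} = \frac{(\sigma_1-\lambda_i)^2}{n-1}.
\]
Combining this with $|\lambda|^2<\sigma_1^2$ and setting $t = \lambda_i/\sigma_1$ gives
\[
1 > t^2 + \frac{(1-t)^2}{n-1}, \quad\text{i.e.}\quad n t^2 - 2t - (n-2) < 0 .
\]
The roots of this quadratic are $t = \bigl(1\pm(n-1)\bigr)/n$, namely $1$ and $-(n-2)/n$. Hence $-(n-2)/n < t < 1$, which yields
\[
\frac{\sigma_{1;i}}{\sigma_1} = 1-t < \frac{2(n-1)}{n}.
\]

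Substituting these three ratio bounds into the identity for $f_i$ gives all the inequalities in \eqref{js-22}. I expect no real obstacle here. The only step requiring care is the lower bound on a negative $\lambda_i$, and it is handled cleanly by the Cauchy--Schwarz and quadratic-root computation above.
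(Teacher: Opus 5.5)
Your proof is correct, and it takes a different route from the paper, which gives no direct argument. The paper invokes Corollary 2.1 of Shankar--Yuan, stated under the normalization $\sigma_2(\lambda)=1$. It then removes that normalization by noting that $f_i=\sigma_{1;i}/\sigma_1-\sigma_2/\sigma_1^2$ is homogeneous of degree zero in $\lambda$.

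You instead use $f_i=1-\lambda_i/\sigma_1-f/\sigma_1$ to turn the lemma into three bounds on eigenvalue ratios: $\lambda_1\ge\sigma_1/n$, $\lambda_i\le\sigma_1/\sqrt2$, and $\lambda_i\ge-\frac{n-2}{n}\sigma_1$ for $i\ge2$. You derive these from the ordering, from $|\lambda|^2=\sigma_1^2-2\sigma_2<\sigma_1^2$, and from Cauchy--Schwarz. The quadratic $nt^2-2t-(n-2)$ does have roots $1$ and $-\frac{n-2}{n}$. Since only degree-zero ratios and the sign of $\sigma_2$ enter, you need no scaling step.

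The citation is shorter. Your route is self-contained, gives strict inequalities, and shows explicitly what the lemma encodes. For example, your bound $\lambda_n>-\frac{n-2}{n}\sigma_1$ is exactly what makes the hypothesis $\lambda_{\min}(D^2u)\ge-c_n\Delta u$ of Theorem \ref{doubling} automatic in dimensions $3$ and $4$, because $\frac{n-2}{n}\le c_n$ holds precisely when $n\le4$.
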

\begin{proof}
\eqref{js-22} follows directly from Corollary 2.1 of \cite{SY2025} although in their result there is a restriction $\sigma_2 (\lambda) = 1$.
Their proof works because in our case, each
\[
f_i = \frac{\sigma_{1;i}}{\sigma_1} - \frac{\sigma_2}{\sigma_1^2}
\]
is invariant under a scaling $\lambda \rightarrow a \lambda$ for any positive constant $a$.
\end{proof}
In the proof of Theorem \ref{thm1}, we need to consider nonsmooth $2$-convex viscosity solutions to \eqref{Hessian}. For the reader's
convenience, we give the definitions as follows. For more details, the reader is referred to \cite{TW99}.
\begin{definition}
An upper semi-continuous function, $u: \Omega \rightarrow [-\infty, \infty)$, is called $k$-convex in $\Omega$ if $\sigma_k (\lambda(D^2 q)) > 0$ for all quadratic polynomials $q$ for which the difference $u - q$ has a finite local maximum in $\Omega$.
\end{definition}

\vspace{4mm}
\section{The doubling inequality}

\begin{theorem}
\label{doubling}
Let $u$ be a $2$-convex solution to \eqref{Hessian} in $B_4 (0) \subset \mathbb{R}^n$. Suppose
\begin{equation}
\label{js-condition}
\frac{\lambda_{\min} (D^2 u)}{\Delta u} \geq -c_n, \
c_n := \frac{\sqrt{3n^2+1} - n + 1}{2n}.
\end{equation}
We have
\begin{equation}
\label{js-doubling}
\sup_{B_2 (0)} \Delta u \leq C (1+ \sup_{B_{1} (0)} \Delta u).
\end{equation}
\end{theorem}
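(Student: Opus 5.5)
We follow the maximum-principle route of Qiu and Shankar--Yuan, with the almost Jacobi inequality replaced by the Lagrange multiplier computation described in the introduction. Write $b = \ln \Delta u$, and consider on $B_2(0)$ (after rescaling $B_4$ as needed) the test function
\[
P = 2\ln \rho(x) + \alpha\, x\cdot \nabla u - \frac{\beta}{2}|x|^2 \cdot(\text{or } + \beta u) + \ln \Delta u ,
\]
where $\rho$ is a cutoff vanishing on $\partial B_2$ and equal to $1$ on $B_1$, and $\alpha,\beta$ are constants chosen later. We assume $\sup_{B_2}\Delta u$ is large compared with $\sup_{B_1}\Delta u$, argue at an interior maximum point $x_0$, and rotate so that $D^2u(x_0)$ is diagonal with $\lambda_1\ge\cdots\ge\lambda_n$. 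The terms $x\cdot\nabla u$ and $u$ are what make the inequality a doubling inequality rather than a pure interior bound. Their linearizations $F^{ij}(x\cdot\nabla u)_{ij}=2\sum f_i\lambda_i + x\cdot\nabla\psi$ and $\sum f_i\lambda_i = f$ (by homogeneity of degree one) produce controlled good or bounded terms. Since $\psi$ depends on $u$ only through $x$ and $u$, these derivatives are bounded by $\|u\|_{C^1}$.

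The core is the inequality $F^{ij}b_{ij}\ge -C - (\text{absorbable gradient terms})$ at $x_0$. Differentiating the equation twice and summing over $k$ gives
\[
F^{ij}(\Delta u)_{ij} = \Delta\psi - \sum_k F^{ij,rs}u_{ijk}u_{rsk},
\]
so
\[
F^{ij}b_{ij} = \frac{\Delta \psi - \sum_k F^{ij,rs}u_{ijk}u_{rsk}}{\Delta u} - \sum_i f_i \frac{(\Delta u)_i^2}{(\Delta u)^2}.
\]
For each fixed $k$, we view $-F^{ij,rs}u_{ijk}u_{rsk}$ (the diagonal part plus the off-diagonal part $-\sum_{p\ne q}\frac{f_p-f_q}{\lambda_p-\lambda_q}u_{pqk}^2$) as a quadratic form in the third derivatives. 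We minimize it subject to two linear constraints: $\sum_j f_j u_{jjk}=\psi_k$, which comes from the equation, and $\sum_j u_{jjk} = (\Delta u)_k = \Delta u\cdot A_k$, where $A_k = b_k$ is expressed through the critical equation $P_k=0$ in terms of $\rho_k/\rho$, $\alpha$, and $x$. For $f=\sigma_2/\sigma_1$ the Hessian $f_{pq}$ is explicit: $f = \sigma_1 - |\lambda|^2/\sigma_1$ up to a factor $1/2$, so $-f_{pq} = \frac{1}{2}\partial_{pq}(|\lambda|^2/\sigma_1)$ is a rank-structured form (identity$/\sigma_1$ plus low rank). The Lagrange multiplier problem can then be solved in closed form, and yields a lower bound of the shape $c(\lambda)\,(\Delta u)^2 A_k^2/\sigma_1 - C|\nabla\psi|^2/\ldots$. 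The key step, and the main obstacle, is to show that this optimal concavity term beats $f_k A_k^2$ from the $-\sum f_i b_i^2$ term for the index $k=1$, and more generally for every $k$. We expect this comparison to need a lower bound on $\lambda_n/\sigma_1$, and the threshold at which the quadratic inequality in $\lambda_{\min}/\Delta u$ stays positive is exactly the root $c_n = \frac{\sqrt{3n^2+1}-n+1}{2n}$ of a quadratic. So we would first do the computation at the extremal configuration $\lambda_2=\cdots=\lambda_n$, verify that the discriminant gives $c_n$, and then reduce the general case to it by monotonicity. If this comparison is tight, we retain a fraction $\epsilon f_k b_k^2$, which amounts to a Jacobi-type inequality.

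With this inequality, the remaining terms are handled as usual. The terms $F^{ij}\rho_i\rho_j/\rho^2$ are controlled using $b_i = -2\rho_i/\rho - \alpha(\ldots)$ together with the retained $\epsilon f_i b_i^2$, by Cauchy--Schwarz. The terms $\sum f_i$ are bounded above and below by Lemma \ref{js-lem2}. Lemma \ref{js-lem1} supplies the positive term $f_1\lambda_1^2\ge \frac{2}{n^2}\psi^2$ coming from $\alpha F^{ij}(x\cdot\nabla u)_{ij}$. Combining these, $0\ge F^{ij}P_{ij}$ forces $\rho^2\Delta u(x_0)\le C$ when $\Delta u(x_0)\gg 1+\sup_{B_1}\Delta u$. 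Comparing $P(x_0)\ge P(y)$, with $y\in B_1$ a point where $\Delta u$ attains its maximum over $B_1$, then gives \eqref{js-doubling}. If the cutoff approach loses too much, we would replace $\rho$ by the Shankar--Yuan choice: maximize $\ln\Delta u$ plus a quadratic in $x$ over the set $\{\Delta u \ge 2\sup_{B_1}\Delta u\}$.
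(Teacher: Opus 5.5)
Your two constraints and your threshold $c_n$ match the paper, but the proposal has genuine gaps: the maximum principle does not close with $\ln\Delta u$ in the test function, and the core inequality is set up in a way that loses the good term.

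\textbf{Closing the maximum principle.} Apart from $F^{ij}b_{ij}$, every linearized term in your $P$ is bounded: $F^{ij}(x\cdot\nabla u)_{ij}=2\psi+x\cdot\nabla\psi$ (as you computed), $F^{ij}u_{ij}=\psi$, and $-\beta\sum f_i$ with $\sum f_i\le n-1$. So Lemma \ref{js-lem1} cannot enter through $\alpha F^{ij}(x\cdot\nabla u)_{ij}$. It can only enter through the gradient term, since the critical equation gives $b_1=-\alpha x_1\lambda_1+O(\rho^{-1})$. Even granting $F^{ij}b_{ij}\ge\epsilon f_ib_i^2-C$, the term $\epsilon\sum f_ib_i^2$ alone must beat the cutoff error $-C\rho^{-2}$.

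Suppose $x_0$ lies on the $\lambda_1$-eigenline and $\rho\lambda_1\gg1$. Then $b_i=O(1)$ for $i\ge2$, while $f_1$ can be of order $\psi^2\lambda_1^{-2}$ (Lemma \ref{js-lem1} is sharp in order; take $\lambda\approx(\Lambda,\psi/2,\psi/2)$ with $n=3$). Hence $\epsilon f_1b_1^2=O(1)$. You learn at best that $\rho(x_0)$ is small, and nothing about $\rho^2\Delta u(x_0)$.

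The missing idea is the iterated logarithm normalized by $M_1=\sup_{B_1}\Delta u$, i.e. the factor $\log\max\{\Delta u/M_1,\gamma\}$ in the paper's $W$. With $U=\log(\Delta u/M_1)$, the gradient term becomes $-(1+U)f_iA_i^2$ and the minimized concavity contributes $U(\tilde Q_i-1)A_i^2$. So $\tilde Q_i-1\ge(1+\epsilon_1)f_i$ leaves $\frac{\epsilon_1}{2}Uf_iA_i^2$, a good term growing like $U$. Since $x_0\notin B_1$, some $x_j^2\ge\frac1n$. In the decisive case $x_1^2\ge\frac1n$, the bound $A_1^2\ge\frac{a^2}{16n}\lambda_1^2$ and Lemma \ref{js-lem1} give $\rho^2U\le C$, i.e. $\Delta u\le CM_1$. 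This upper bound on $\max W$ is where the doubling form comes from. Your comparison $P(x_0)\ge P(y)$ with $y\in B_1$ points the wrong way. Also, a cutoff vanishing on $\partial B_2$ cannot control $\sup_{B_2}\Delta u$; the paper works in $B_3$.

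\textbf{The core inequality.} If you minimize for each fixed $k$ with the off-diagonal $u_{pqk}$ as free variables, the off-diagonal part simply drops out. The diagonal part alone only cancels $-(\Delta u_k)^2$, because $\lambda$ is a null direction of $D^2f$. One finds $\tilde Q-1=O(\Delta u^{-2})$, so there is no good term at all. The paper instead keeps the terms $\frac{2}{\Delta u}u_{pjj}^2$, $p\neq j$, and after summing over $j$ reindexes them as $u_{ppi}^2$. The $i$-th problem then minimizes $Q_i=3\sum_{p\ne i}u_{ppi}^2+u_{iii}^2$ under both constraints. With this weight, and since $\sum_lf_l$ and $\sum_lf_l^2$ both equal $n-1$ up to $O(\psi/\Delta u)$, the requirement $\tilde Q_i-1\ge f_i$ becomes $(n-1)+(2n+2)f_i-2nf_i^2\ge0$, i.e. $f_i\le1+c_n$, for each $i$ separately. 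No reduction to $\lambda_2=\cdots=\lambda_n$ is needed.

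\textbf{The degenerate regime.} The margin in that inequality vanishes as $\lambda_n\to-c_n\Delta u$, so you cannot retain a uniform fraction $\epsilon f_kb_k^2$. The paper splits at $\lambda_n=-\frac{c_n}{2}\Delta u$. In the degenerate range it has only $\tilde Q_i-1\ge f_i(1-C/\Delta u)$. It closes with the term $\frac{b}{2}|\nabla u|^2$ in $W$ (here $b$ is a small constant, not your $\ln\Delta u$). Its linearization gives $bf_i\lambda_i^2\ge bf_n\lambda_n^2\gtrsim b\,c_n^2\Delta u^2$ by \eqref{js-22}. With $a^2\ll b\ll1$ this absorbs $-f_iA_i^2$ and bounds $\rho\Delta u$. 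Your test function has no term producing $\sum f_i\lambda_i^2$, since $-\frac{\beta}{2}|x|^2$ and $\beta u$ linearize to bounded quantities. So this regime is left open.
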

\begin{proof}
We consider the test function in $B_3(0)$,
\[
W = \rho^\alpha \exp\left\{a (x\cdot \nabla u - u) + b \frac{|\nabla u|^2}{2}\right\} \log \max\left\{\frac{\Delta u}{M_1}, \gamma\right\},
\]
where $M_1 = \sup_{B_{1} (0)} \Delta u$, $\rho (x) = 3^2- |x|^2$, $\alpha$, $a$, $b$ and $\gamma \geq 2$ are positive constants to be chosen.
Suppose $W$ achieves its maximum at $x_0 \in B_3$. We may assume $\Delta u (x_0) \geq (\gamma+1) M_1$ for otherwise we are done.
We may also assume that $D^2 u (x_0) = \lambda_i \delta_{ij}$ is diagonal and
\[
\lambda_1 \geq \cdots \geq \lambda_n.
\]
Let
\[
F^{ij} (x) = \frac{\partial (\sigma_2/\sigma_1)}{u_{ij}}(D^2 u (x)).
\]
Then $F^{ij} (x_0) = f_{i} \delta_{ij}$ is also diagonal at $x_0$ and furthermore,
\[
0 < f_1 \leq \cdots \leq f_n.
\]
By differentiating the equation \eqref{Hessian} twice, we get
\begin{equation}
\label{js-3}
F^{ij} u_{ijl} = \psi_l + \psi_u u_l, \ \ \mbox{ for each } l = 1, \ldots, n
\end{equation}
and
\begin{equation}
\label{js-4}
F^{ij} \Delta u_{ij} + \sum_{l=1}^n F^{ij, pq} u_{ijl} u_{pql} = \Delta_x \psi + 2 \nabla \psi_u \cdot \nabla u + \psi_u \Delta u + \psi_{uu} |\nabla u|^2,
\end{equation}
where
\[
F^{ij, pq} = \frac{\partial^2 \big( \frac{\sigma_2}{\sigma_1} \big)}{\partial u_{ij} \partial u_{pq}} (D^2 u).
\]
%By the concavity of $\sqrt{\sigma_2}$ we have
%\begin{equation}
%\label{js-18}
%F^{ij, pq} u_{ijl} u_{pql} \leq \frac{1}{2} (F^{ij} u_{ijl})^2 = 0
%\end{equation}
As $\Delta u (x_0) \geq (\gamma+1) M_1$, the function
\[
\alpha \ln \rho + a(x\cdot \nabla u - u) + b \frac{|\nabla u|^2}{2} + \log \log \frac{\Delta u}{M_1}
\]
also attains its local maximum at $x_0$. For simplicity, let $U := \log\Delta u - \log M_1$. We have, at $x_0$,
\begin{equation}
\label{js-1}
\frac{\Delta u_i}{U \Delta u} + \alpha \frac{\rho_i}{\rho} + a x_i u_{ii} + b u_i u_{ii} = 0, \ \ \mbox{ for } i = 1, \ldots, n
\end{equation}
and
\begin{equation}
\label{js-2}
\begin{aligned}
0 \geq & f_i \left(\frac{\Delta u_{ii}}{U \Delta u} - (1+ U)\frac{(\Delta u_i)^2}{(U \Delta u)^2} \right.\\
  & \left. + a \lambda_i + b \lambda_i^2
+ (ax_k + b u_k) u_{kii} + \alpha \frac{\rho_{ii}}{\rho} - \alpha \frac{\rho_i^2}{\rho^2}\right)\\
 \geq & f_i \left(\frac{\Delta u_{ii}}{U \Delta u} - (1+ U)\frac{(\Delta u_i)^2}{(U \Delta u)^2} + \alpha \frac{\rho_{ii}}{\rho} - \alpha \frac{\rho_i^2}{\rho^2}\right)
  + a \psi + b f_i \lambda_i^2\\
 & - C (a+b).
   % = f_i \left(\frac{\Delta u_{ii}}{\Delta u} + \frac{\rho_{ii}}{\rho} - 2\frac{\rho_i^2}{\rho^2}\right).
\end{aligned}
\end{equation}
%We first consider the case that there exists $j \geq 2$ such that $|\lambda_j| \geq \delta \Delta u$, where $\delta$ is a positive constant sufficiently small
%to be determined. By \eqref{js-4}, \eqref{js-18}, \eqref{js-1} and \eqref{js-2} we have
%\[
%0 \geq f_i \left(\alpha \frac{\rho_{ii}}{\rho} - (3\alpha^2 + \alpha) \frac{\rho_i^2}{\rho^2}\right)
% + (b-3b^2 u_i^2 - 3a^2 x_i^2) f_i \lambda_i^2.
%   % = f_i \left(\frac{\Delta u_{ii}}{\Delta u} + \frac{\rho_{ii}}{\rho} - 2\frac{\rho_i^2}{\rho^2}\right).
%\]
%Assuming $12a^2 < b < \frac{1}{12\sup_{B_1} |\nabla u|^2}$, by \eqref{SY-eq1}, we get
%\begin{equation}
%\label{js-17}
%0 \geq f_i \left(\alpha \frac{\rho_{ii}}{\rho} - (3 \alpha^2 + \alpha) \frac{\rho_i^2}{\rho^2}\right)
% + \frac{b c_0 \delta^2}{2} \Delta u^2 \sum f_i,
%\end{equation}
%where $c_0$ is positive constant depending only on $n$. Then we obtain a bound of $\rho\Delta u$.

%Thus, without loss of generality, we assume that $|\lambda_j| < \delta \Delta u$ for all $j \geq 2$.

We note that
\begin{equation} \label{js-5-q}
\begin{aligned}
- \sum_{pqrs} F^{pq, rs} u_{pqj} u_{rsj}  = & \sum\limits_{p \neq q} \frac{f_p - f_q}{\lambda_q - \lambda_p} u_{pqj}^2 - \sum\limits_{pq} \frac{\partial^2 f}{\partial \lambda_p \partial \lambda_q} u_{ppj} u_{qqj} \\
\geq  & 2 \sum\limits_{p \neq j} \frac{f_p - f_j}{\lambda_j - \lambda_p} u_{pjj}^2 - \sum\limits_{pq} \frac{\partial^2 f}{\partial \lambda_p \partial \lambda_q} u_{ppj} u_{qqj}\\
 = & \frac{1}{\Delta u} \left(2\sum\limits_{p \neq j} u_{pjj}^2 - \sum\limits_{p\neq q} u_{ppj} u_{qqj} + 2 \sum_p f_p u_{ppj} \sum_q u_{qqj} \right).
\end{aligned}
\end{equation}
By \eqref{js-1} we realize that
\begin{equation}
\label{js-6-q}
 \sum\limits_{p \neq q} u_{ppj} u_{qqj}
= (U \Delta u)^2 A_j^2 - \sum\limits_p u_{ppj}^2,
\end{equation}
where $A_j$ is defined by
\[
A_j = \alpha \frac{\rho_j}{\rho} + a x_j \lambda_{j} + b u_j \lambda_{j}, \ \ j = 1, \ldots, n.
\]
Combining \eqref{js-3}, \eqref{js-4}, \eqref{js-1}, \eqref{js-2}, \eqref{js-5-q} and \eqref{js-6-q}, we obtain
\begin{equation}
\label{js-7}
\begin{aligned}
0 \geq & \alpha f_i \left(\frac{\rho_{ii}}{\rho} - \frac{\rho_i^2}{\rho^2}\right) - (1+ U)f_i A_i^2 - U \sum_{i=1}^n A_i^2\\
  & + \frac{1}{U \Delta u^2}\sum_{i=1}^n Q_i
 + b f_i \lambda_i^2 - C (a+b) - \frac{C}{U} - \frac{C \sum_i |A_i|}{\Delta u},
% \geq & \alpha f_i \left(\frac{\rho_{ii}}{\rho} - \frac{\rho_i^2}{\rho^2}\right) - \left(3-\frac{2}{n}\right)\Delta u \sum_{i=2}^n A_i^2
%    - [1+(n-1)\delta] \Delta u A_1^2\\
%  & + \frac{1}{\Delta u}\sum_{i=1}^n Q_i
%  + b \sum_{i=1}^n f_i \lambda_i^2,
\end{aligned}
\end{equation}
where
\[
Q_i = 2 \sum\limits_{p \neq i} u_{ppi}^2 + \sum\limits_p u_{ppi}^2, \ \ i = 1, \ldots, n.
\]
For each $i = 1, \ldots, n$, we shall minimize the quantity $Q_i$ under conditions \eqref{js-3} and \eqref{js-1}.
It suffices to minimize the function $L_i$ in terms of $t_1$, \ldots, $t_n$, $\mu_1$, $\mu_2$.
\[
L_i =  2 \sum\limits_{j \neq i} t_j^2 + \sum\limits_j t_j^2
- \mu_1 \left(\sum_j f_j t_j - B_i\right) - \mu_2 \left(\sum_j t_j + (U \Delta u) A_i\right),
\]
where the quantity $B_i$ is defined by
\[
B_i = \psi_i + \psi_u u_i.
\]
To find the critical points of $L_i$, we solve the following linear equations.
\begin{equation} \label{js-8}
\begin{aligned}
& \frac{\partial L_i}{\partial t_j} = 6 t_j - \mu_1 f_j - \mu_2 = 0, \qquad j \neq i, \qquad \qquad  & (j) \\
& \frac{\partial L_i}{\partial t_i} = 2 t_i - \mu_1 f_i - \mu_2 = 0,  \qquad \qquad & (i) \\
& \sum_j f_j t_j = B_i,  \qquad \qquad & (c) \\
& \sum_j t_j = - (U \Delta u) A_i.  \qquad \qquad & (d)
\end{aligned}
\end{equation}
Taking
\[   t_j =  \frac{\mu_1}{6} f_j + \frac{\mu_2}{6}, \qquad  j \neq i  \]
and
\[   t_i = \frac{\mu_1}{2} f_i + \frac{\mu_2}{2}   \]
into equation $(c)$ and $(d)$ yields
\begin{equation} \label{js-9}
\bigg( \sum\limits_{j \neq i} f_j^2 + 3 f_i^2 \bigg) \mu_1 + \bigg( \sum\limits_{j \neq i} f_j + 3 f_i \bigg) \mu_2 = 6 B_i,
\end{equation}
and
\begin{equation} \label{js-10}
\bigg( \sum\limits_{j \neq i} f_j + 3 f_i \bigg) \mu_1 + (n + 2) \mu_2 = - 6 (U \Delta u) A_i.
\end{equation}
For each $i$, let
\[
\begin{aligned}
R_i = \,& \sum_{l \neq i} f_l^2 + 3 f_i^2 = \sum_{l=1}^n f_l^2 + 2 f_i^2,\\
S_i = \,& \sum_{l \neq i} f_l + 3 f_i = \sum_{l=1}^n f_l + 2 f_i.
\end{aligned}
\]
By \eqref{js-9} and \eqref{js-10}, we get
\begin{equation} \label{js-11}
\begin{aligned}
 \mu_1
= \frac{6 (n+2) B_i + 6 S_i (U \Delta u) A_i}{(n + 2)R_i - S_i^2}
\end{aligned}
\end{equation}
and
\begin{equation} \label{js-12}
\mu_2 = - \frac{6S_i B_i + 6 R_i (U \Delta u) A_i}{(n + 2)R_i - S_i^2}.
\end{equation}
Hence we obtain
\begin{equation} \label{js-13}
t_j =  \frac{\left[(n+2) f_j - S_i\right] B_i + (S_i f_j - R_i)(U \Delta u) A_i}{(n + 2)R_i - S_i^2}, \qquad  j \neq i
\end{equation}
and
\begin{equation} \label{js-14}
t_i =  \frac{3\left[(n+2)f_i - S_i\right] B_i + 3 (f_i S_i - R_i)(U \Delta u) A_i}{(n + 2)R_i - S_i^2}.
\end{equation}
To sum up, we obtain that the minimum of
\[
Q_i = 2 \sum\limits_{j \neq i} t_j^2 + \sum\limits_j t_j^2 = 3 \sum\limits_{j \neq i} t_j^2 + t_i^2
\]
is achieved at $t_j$, $1\leq j \leq n$ given by \eqref{js-13} and \eqref{js-14}.
By calculations, we see
\[
\begin{aligned}
\left[(n + 2)R_i - S_i^2\right]^2 \min Q_i = \,&
3 \sum_{j\neq i} \bigg\{(R_i - f_j S_i)^2 (U \Delta u)^2A_i^2 + \left[(n+2)f_j - S_i\right]^2 B_i^2\\
 & - 2 (U \Delta u) A_i B_i \left[(n+2)f_j - S_i\right](R_i - f_j S_i)\bigg\}\\
 & + 9 \bigg\{(R_i - f_i S_i)^2 (U \Delta u)^2A_i^2 + \left[(n+2)f_i - S_i\right]^2 B_i^2\\
 & - 2 (U \Delta u) A_i B_i \left[(n+2)f_i - S_i\right](R_i - f_i S_i)\bigg\}\\
 = \,& \left[(n + 2)R_i - S_i^2\right] \bigg\{3R_i(U \Delta u)^2A_i^2 + 3(n+2) B_i^2\\
 & + 6 S_i (U \Delta u)A_i B_i\bigg\}.
\end{aligned}
\]
It follows that
\[
\min Q_i = \frac{3R_i(U \Delta u)^2A_i^2 + 3(n+2) B_i^2
 + 6 S_i (U \Delta u)A_i B_i}{(n + 2)R_i - S_i^2}.
\]
Since
\[
f_l = \frac{1}{\Delta u} \left(\sigma_{1;l} - \frac{\sigma_2}{\sigma_1}\right) = \frac{1}{\Delta u} \left(\Delta u - \lambda_l - \psi \right),
\]
we have
\begin{equation}
\label{js-16}
R_i = \sum_{l=1}^n f_l^2 + 2 f_i^2 = \frac{1}{\Delta u^2}\left((n-1)\Delta u^2 + 2 \Delta u^2 f_i^2 - 2n \Delta u \psi + n \psi^2 \right)
\end{equation}
and
\begin{equation}
\label{js-17}
S_i = \sum_{l=1}^n f_l + 2 f_i = \frac{1}{\Delta u}\left((n-1)\Delta u - n \psi + 2\Delta u f_i\right).
\end{equation}

Next, from \eqref{js-16} and \eqref{js-17} we see
\[ \begin{aligned}
& (n + 2) R_i - S_i^2 \\
= & \frac{1}{(\Delta u)^2} \bigg( 3 (n - 1) (\Delta u)^2 + 2 n (\Delta u)^2 f_i^2 - 4 (n - 1)(\Delta u)^2 f_i \\
& + 4 n \Delta u f_i \psi - 6 n \psi \Delta u + 2 n \psi^2 \bigg).
\end{aligned} \]
It follows that
\begin{equation}
\label{js-18}
\frac{6 S_i (U \Delta u)|A_i B_i|}{(n + 2)R_i - S_i^2} \leq C_0 U \Delta u |A_i|
\end{equation}
for some constant $C_0$ depending only on $n$, $\|\psi\|_{C^1}$ and $\|u\|_{C^1(B_3 (0))}$.
Let
\[
\tilde{Q}_i = \frac{3 R_i}{(n+2) R_i - S_i^2}.
\]
By calculations, we have
\[
\tilde{Q}_i = \frac{3(n-1)\Delta u^2 + 6 \Delta u^2 f_i^2 - 6n \psi \Delta u + 3n \psi^2}{3(n-1) \Delta u^2 + 2n \Delta u^2 f_i^2 - 4(n-1) \Delta u^2 f_i + 4n \Delta u f_i \psi - 6n \psi \Delta u + 2n \psi^2}.
\]
Therefore,
\begin{equation}
\label{js-24}
\begin{aligned}
& \frac{\tilde{Q}_i - 1}{\Delta u}\\
 = \,& \frac{f_i}{\Delta u} \frac{4 (n-1) \Delta u^2 - (2n-6) \Delta u^2 f_i -4n \psi \Delta u + \frac{n \psi^2}{f_i}}{3(n-1) \Delta u^2 + 2n \Delta u^2 f_i^2 - 4(n-1) \Delta u^2 f_i + 4n \Delta u f_i \psi - 6n \psi \Delta u + 2n \psi^2}\\
 = \,& \frac{f_i}{\Delta u} \left\{1+ \frac{\Delta u^2\left[(n-1)+(2n+2)f_i - 2n f_i^2\right] - 4n \psi \Delta u f_i + 2n \psi \Delta u + \frac{n \psi^2}{f_i} - 2n \psi^2}{3(n-1) \Delta u^2 + 2n \Delta u^2 f_i^2 - 4(n-1) \Delta u^2 f_i + 4n \psi \Delta u f_i - 6n \psi \Delta u + 2n \psi^2}\right\}.
\end{aligned}
\end{equation}
Since $\lambda_n \geq -c_n \Delta u$, we have
\[
f_i \leq f_n \leq \frac{n+1 + \sqrt{3n^2+1}}{2n} - \frac{\psi}{\Delta u} \leq \frac{n+1 + \sqrt{3n^2+1}}{2n} \mbox{ for each } i.
\]
Thus, we have
\begin{equation}
\label{js-25}
(n-1)+(2n+2)f_i - 2n f_i^2 \geq 0.
\end{equation}
We first consider the case $-\frac{c_n}{2} \Delta u \geq \lambda_n \geq - c_n \Delta u$. From \eqref{js-24} and \eqref{js-25} we derive
\[
\frac{\tilde{Q}_i - 1}{\Delta u} \geq \frac{f_i}{\Delta u} \left(1 - C\frac{1}{\Delta u}\right).
\]
Consequently, by \eqref{js-7} and \eqref{js-18}, we obtain
\begin{equation}
\label{js-15}
\begin{aligned}
0 \geq \,& \alpha f_i \left(\frac{\rho_{ii}}{\rho} - \frac{\rho_i^2}{\rho^2}\right) - f_i A_i^2 - \frac{C U}{\Delta u} f_i A_i^2 + b f_i \lambda_i^2\\
 & - \sum_{i=1}^n\frac{C |A_i|}{\Delta u}
  - C (a+b) - \frac{C}{U} \\
 \geq \,& \alpha f_i \left(\frac{\rho_{ii}}{\rho} - \frac{\rho_i^2}{\rho^2}\right) - \left(1+\frac{C U}{\Delta u}\right) f_i \left(\frac{\alpha \rho_i}{\rho} + a x_i \lambda_i + b u_i \lambda_i\right)^2 + b f_i \lambda_i^2\\
 & - \frac{C\alpha}{\rho \Delta u} - C (a+b) - \frac{C}{U} \\
 \geq \,& \left[b- C(a^2+b^2)\right] f_i \lambda_i^2 - \frac{C(\alpha+\alpha^2)}{\rho^2} - C (a+b)\\
 \geq \,& \frac{b}{2} f_i \lambda_i^2 - \frac{C(\alpha+\alpha^2)}{\rho^2} - C (a+b)
\end{aligned}
\end{equation}
by assuming $a^2 \ll b \ll 1$. By \eqref{js-22} and \eqref{js-15} we get
\[
0 \geq \frac{bc_0 c_n^2}{8} \Delta u^2 - \frac{C(\alpha+\alpha^2)}{\rho^2} - C (a+b)
\]
and a bound of $\rho \Delta u$ is derived, where $c_0$ is a known positive constant in \eqref{js-22}.

Next, we deal with the case $\lambda_n \geq -\frac{c_n}{2} \Delta u$. In this case, \eqref{js-25} can be improved to
\[
(n-1)+(2n+2)f_i - 2n f_i^2 \geq \epsilon_0
\]
for some positive constant $\epsilon_0$ depending only on $n$.
Thus, if $\Delta u$ is large enough, there exists
a positive constant $\epsilon_1$ depending only on $n$ such that
\begin{equation}
\label{js-23}
\frac{\tilde{Q}_i - 1}{\Delta u} \geq \frac{f_i}{\Delta u} (1 + \epsilon_1).
\end{equation}
By \eqref{js-7}, \eqref{js-18} and \eqref{js-23}, we have (provided $U$ is sufficiently large)
\begin{equation}
\label{js-26}
\begin{aligned}
0 \geq \,& \alpha f_i \left(\frac{\rho_{ii}}{\rho} - \frac{\rho_i^2}{\rho^2}\right) - f_i A_i^2 + \epsilon_1 U f_i A_i^2 + b f_i \lambda_i^2\\
 & - \sum_{i=1}^n\frac{C |A_i|}{\Delta u}
  - C (a+b) - \frac{C}{U} \\
 \geq \,& \alpha f_i \left(\frac{\rho_{ii}}{\rho} - \frac{\rho_i^2}{\rho^2}\right) + \frac{\epsilon_1}{2} U f_i A_i^2+ b f_i \lambda_i^2\\
 & - \frac{C\alpha}{\rho \Delta u} - C (a+b) - \frac{C}{U} \\
 \geq \,& \frac{\epsilon_1}{2} U f_i A_i^2+ b f_i \lambda_i^2 - C (a+b) - \frac{C\alpha}{\rho^2}.
\end{aligned}
\end{equation}
Note that $x_0 \in B_3(0) - B_1 (0)$. We consider two cases.

\textbf{Case 1.} $x_1^2 \geq \frac{1}{n}$. We may further assume $a > 2\sqrt{n} \sup |\nabla u| b$ to obtain that
\[
\begin{aligned}
A_1^2 = \,& \left(-2 \alpha \frac{x_1}{\rho} + b u_1 \lambda_{1} + a x_1 \lambda_{1}\right)^2\\
  \geq \,& \frac{a^2}{8n} \lambda_1^2 - C\frac{\alpha^2}{\rho^2} \geq \frac{a^2}{16n} \lambda_1^2
\end{aligned}
\]
since $\lambda_1$ can be sufficiently large.
By \eqref{js-21} and \eqref{js-26}, we get
\[
0 \geq - \frac{C\alpha}{\rho^2} + \frac{\epsilon_1}{2} U \frac{a^2}{8n^3} - C (a+b).
\]
We then get a bound of $\rho^2 U$.

\textbf{Case 2.} $x_j^2 \geq \frac{1}{n}$ for some $j \geq 2$.
If
\[
\frac{\alpha}{\rho} \leq \frac{|b u_j \lambda_{j} + a x_j \lambda_{j}|}{|x_j|},
\]
by \eqref{js-26} and fixing $a^2 \ll b \ll a \ll 1 \leq \alpha$, we get
\[
\begin{aligned}
0 \geq \,& - \frac{C\alpha}{\rho^2} + b f_j \lambda_j^2 - C (a+b)\\
\geq \,& b f_j \lambda_j^2 - \frac{C\alpha}{\rho^2}
 \geq - C \left[(a^2 + b^2) \lambda_j^2\right] + b f_j \lambda_j^2 > 0.
\end{aligned}
\]
Here we have used the fact that
\[
f_j = \frac{\sigma_{1;j}}{\sigma_1} - \frac{\psi}{\Delta u} \geq c_1 > 0,\ \ j \geq 2
\]
for some constant $c_1$ depending only on $n$.

Otherwise, we suppose
\[
\frac{\alpha}{\rho} \geq \frac{|b u_j \lambda_{j} + a x_j \lambda_{j}|}{|x_j|},
\]
We find
\[
A_j^2 = \left(-2 \alpha \frac{x_j}{\rho} + b u_j \lambda_{j} + a x_j \lambda_{j}\right)^2
  \geq \frac{\alpha^2}{n\rho^2}.
\]
We then get a bound of $\rho U$ again by \eqref{js-26}.

\end{proof}
\begin{remark}
\label{js-rmk}
Since $\Delta u$ can be sufficiently large at the maximum point $x_0$, a semi-convex solution $u$
satisfies the condition \eqref{js-condition} naturally at $x_0$. Noticing that we only need to do calculations at
$x_0$ in the proof of Theorem \ref{doubling}, we find the doubling inequality \eqref{js-doubling} holds for semi-convex
$2$-convex solutions in general dimensions.
\end{remark}

\vspace{4mm}

\section{Proof of Theorem \ref{thm1} and Theorem \ref{thm2}}

The rest of proof is similar to that in \cite{SY2025} and \cite{Fan25}. For completeness, we provide a sketch here.
In this section, we assume that $n=3$ or the solution $u$ is semi-convex in dimensions $n\geq 4$. We assume
$u$ is defined in $B_4 (0)$ by scaling $16 u (x/4)$ and prove that $|D^2 u (0)|$ is controlled by $\|u\|_{C^1 (B_4 (0))}$. Otherwise, there exists
a sequence of $2$-convex smooth solutions $u_k$ of \eqref{Hessian} in $B_4 (0)$ such that $\|u_k\|_{C^1 (B_3(0))} \leq M$ independent of $k$, but
$|D^2 u_k (0)| \rightarrow \infty$. By Arzela-Ascoli's theorem, up to a subsequence, $u_k$ uniformly converges to a Lipschitz function $u$ in $B_3(0)$.
By the closeness of viscosity solutions, $u$ is a $2$-convex viscosity solution of \eqref{Hessian}. (c.f. \cite{CC95}, see Lemma 6.1 of \cite{SY2025} also.)

If $u$ is semi-convex, by the classic Alexandrov theorem for convex functions (see Theorem 6.9 in \cite{EG92}), $u$ satisfies the Alexandrov type regularity,
i.e., $u$ is twice differentiable almost everywhere.

In dimension three, since $u$ is $2$-convex in $\mathbb{R}^3$, the Alexandrov type regularity of $u$
follows immediately from Theorem 1.1 of Chaudhuri-Trudinger \cite{CT05}.

Fix a point $y \in B_{1/3} (0)$ such that $u$ is twice differentiable at $y$. Let $Q(x)$ be
the quadratic polynomial such that
\[
|u (x) - Q (x)| = o (|x-y|^2).
\]

Let $v_k = u_k - Q$ and near $y$, and
\[
\bar{v}_k (\bar{x}) = \frac{1}{r^2} v_k (r \bar{x} + y), \ \ \bar{x} \in B_1 (0)
\]
for small $r > 0$. We see
\[
\|\bar{v}_k\|_{L^\infty (B_1(0))} \leq \frac{\|u_k (r \bar{x} + y) - u (r \bar{x} + y)\|_{L^\infty (B_1(0))}}{r^2} + \sigma(r),
\]
where $\sigma(r) = \frac{o(r^2)}{r^2}$. Furthermore, $\bar{v}_k$ satisfies the equation
\[
\begin{aligned}
\frac{\sigma_2}{\sigma_1} \left(D^2 \bar{v}_k (\bar{x}) + D^2 \frac{Q (r \bar{x} + y)}{r^2}\right)
 = \,& \psi (r \bar{x} + y, u_k (r \bar{x} + y))\\
 = \,& \psi (r \bar{x} + y, r^2 \bar{v}_k (\bar{x}) + Q(r \bar{x} + y)).
\end{aligned}
\]
To proceed we define
\[
G (M, z, \bar{x}) = \frac{\sigma_2}{\sigma_1} (M + D^2 Q) - \frac{\sigma_2}{\sigma_1} (D^2 Q) - \psi (r\bar{x}+y, r^2 z + Q) + \psi(r\bar{x} + y, Q)
\]
for $(M, z, \bar{x}) \in \mathbb{S}^{n\times n} \times \mathbb{R} \times B_1 (0)$. It is easy to check that $\bar{v}_k$ solves the equation
\[
G (D^2 \bar{v}_k, \bar{v}_k, \bar{x}) = \psi(r\bar{x} + y, Q) - \psi (y, Q(y))
\]
since $D^2 Q \equiv D^2 u(y)$ and $Q (y) = u (y)$. We see $G$ and $\bar{v}_k$ satisfies the conditions of Theorem 5.1 in \cite{Fan25} which is a
generalization of Savin's small perturbation theorem (\cite{S07}, Theorem 1.3) by choosing $r = \rho$ sufficiently small.
It follows that $\|\bar{v}_k\|_{C^{2,\alpha} (B_{1/2}(0))} \leq C$ independent of $k$. Thus,
\[
\Delta u_k \leq C \mbox{ in } B_{\rho/2} (y) \mbox{ uniformly in } k.
\]
Finally, by Theorem \ref{doubling} and Remark \ref{js-rmk}, we see
\[
\begin{aligned}
\sup_{B_1 (y)} \Delta u_k \leq \,& C_1\sup_{B_{1/2} (y)} \Delta u_k \leq \cdots \leq C_j \cdots C_1 \sup_{B_{1/2^j} (y)} \Delta u_k\\
 \leq \,& C_j \cdots C_1 \sup_{B_{\rho/2} (y)} \Delta u_k \leq C,
\end{aligned}
\]
where $j$ is chosen such that $\frac{1}{2^j} \leq \frac{\rho}{2}$. This contradicts the assumption $|D^2 u_k (0)| \rightarrow \infty$.
Theorem \ref{thm1} and \ref{thm2} are proved.

\vspace{4mm}

\section{Appendix}

In this section, we give a different proof of the doubling inequality for $2$-convex solutions to the $2$-Hessian equation in dimension four.
Let $u$ be a $2$-convex solution to the $2$-Hessian equation
\begin{equation}
\label{Hessian'}
\sigma_2 (D^2 u) = \psi (x, u, \nabla u)
\end{equation}
in $B_4 (0)$. We provide another proof of the following doubling inequality.
\begin{theorem}
\label{doubling-1}
Suppose
\begin{equation}
\label{js-condition-1}
\frac{\lambda_{\min} (D^2 u)}{\Delta u} \geq -c_n, \
c_n := \frac{\sqrt{3n^2+1} - n + 1}{2n}.
\end{equation}
We have
\begin{equation}
\label{js-doubling-1}
\sup_{B_2 (0)} \Delta u \leq C (1+ \sup_{B_{1} (0)} \Delta u).
\end{equation}
\end{theorem}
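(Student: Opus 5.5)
We follow the proof of Theorem \ref{doubling} almost line by line, replacing $\sigma_2/\sigma_1$ by $\sigma_2$ (normalized so that $f_i = \sigma_{1;i}$) and tracking the extra gradient dependence of $\psi$.

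\textbf{Test function and critical equations.} Take the same test function
\[
W = \rho^\alpha \exp\Big\{a (x\cdot \nabla u - u) + b \tfrac{|\nabla u|^2}{2}\Big\} \log \max\Big\{\tfrac{\Delta u}{M_1}, \gamma\Big\}
\]
in $B_3(0)$, with $\rho = 9-|x|^2$. At an interior maximum $x_0$, diagonalize $D^2u$ with $\lambda_1\ge\cdots\ge\lambda_n$, and let $F^{ij}=\sigma_2^{ij}$, so $f_i=\sigma_{1;i}=\Delta u-\lambda_i$. The first-order condition is again \eqref{js-1}, with the same $A_i$. Differentiating \eqref{Hessian'} once gives
\[
\sum_j f_j u_{jji} = B_i := \psi_{x_i}+\psi_u u_i+\psi_{p_i}\lambda_i .
\]
The new feature is that $B_i$ now contains a term of size $C\lambda_i$. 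The second differentiation produces $\sum_k \psi_{p_k}\Delta u_k$ together with quadratic terms in $D^2u$ of size $C\Delta u^2$. We substitute $\Delta u_k = -U\Delta u\,A_k$ from \eqref{js-1}. After dividing by $U\Delta u\,\sigma_1$, the quadratic terms become $C\Delta u/U$, which is bounded by $\epsilon\, b\sum f_i\lambda_i^2/\sigma_1$ once $\Delta u$ is large. For the concavity term we use
\[
-\sigma_2^{pq,rs}u_{pqj}u_{rsj} = \sum_{p\ne q}u_{pqj}^2 - \sum_{p\ne q}u_{ppj}u_{qqj}.
\]
Dropping off-diagonal terms except $u_{pjj}$, and using $\sum_{p\neq q}u_{ppj}u_{qqj}=(U\Delta u A_j)^2-\sum_p u_{ppj}^2$, leads to the quadratic form $Q_j = 2\sum_{p\ne j}u_{ppj}^2+\sum_p u_{ppj}^2$, exactly as in \eqref{js-7}. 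The difference is that the factor $1/\Delta u$ is replaced by a factor $\sigma_1^{-1}$ coming from normalizing with $\sum f_i=(n-1)\sigma_1$.

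\textbf{Lagrange multiplier step.} Minimize $Q_j$ under the two linear constraints, which gives
\[
\min Q_j = \frac{3R_j(U\Delta u)^2A_j^2+3(n+2)B_j^2+6S_j(U\Delta u)A_jB_j}{(n+2)R_j-S_j^2},
\]
with $R_j,S_j$ built from $f_l=\Delta u-\lambda_l$. By homogeneity, $f_l/\Delta u = 1-\lambda_l/\Delta u$. The ratio $\tilde Q_j = 3R_j/((n+2)R_j-S_j^2)$ is then an explicit rational function of $\lambda_j/\Delta u$, with no $\psi$ corrections. The required inequality $\tilde Q_j - 1 \ge (\text{const})(f_j/\Delta u)$ should reduce to the same quadratic $(n-1)+(2n+2)t-2nt^2\ge 0$ in $t=f_j/\Delta u\le 1+c_n$. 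That quadratic is nonnegative precisely because of \eqref{js-condition-1}, which is where $c_n$ comes from. The cross term $6S_jU\Delta u A_jB_j$ is controlled as in \eqref{js-18}. Since now $|B_j|\le C(1+|\lambda_j|)$, the error becomes $CU|A_j|(1+|\lambda_j|)/\Delta u$. By Cauchy--Schwarz it is absorbed into $\tfrac{\epsilon_1}{2}Uf_jA_j^2$ plus $CUf_j^{-1}\lambda_j^2/\Delta u^2$. The term $B_j^2$ is simply dropped, since it is nonnegative.

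\textbf{Case analysis and main obstacle.} We again split according to whether $\lambda_n\ge -\frac{c_n}{2}\Delta u$ (strict gain $\epsilon_1$) or not. In the second case $|\lambda_n|\sim\Delta u$, and the term $b\sum f_i\lambda_i^2$ with $f_n\ge\Delta u$ already gives a bound on $\rho\Delta u$. Then we run Cases 1 and 2 on $x_0$ as in Theorem \ref{doubling}, using the analogue of Lemma \ref{js-lem1}, namely $f_1\lambda_1^2\ge c\,\sigma_2\lambda_1\ge c\,\Delta u$, and $f_j\ge c\Delta u$ for $j\ge2$.

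The main obstacle is the gradient dependence of $\psi$. The new error $CUf_j^{-1}\lambda_j^2/\Delta u^2$ has to be absorbed, and for $j=1$ the quantity $f_1$ can be small, of order $\sigma_2/\lambda_1$. I would handle this by noting that $\lambda_1^2/(f_1\Delta u^2)\lesssim \Delta u/\sigma_2$. This is large, so instead I would keep $B_1^2$ in $\min Q_1$. Its coefficient $3(n+2)/((n+2)R_1-S_1^2)$ is positive, and completing the square with the cross term shows that the combined quadratic form in $(A_1,B_1)$ is nonnegative. This removes the need for any bound on $B_1$.
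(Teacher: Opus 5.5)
\textbf{There is a genuine gap, in the last step.} Your overall route is the paper's: the same test function, the same Lagrange-multiplier value of $\min Q_i$, the same quadratic $(n-1)+(2n+2)t-2nt^2\ge 0$ producing $c_n$, and the same case split with Cases 1 and 2. The failure is in how you handle the step you flag as the main obstacle.

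\textbf{Why your fix fails.} Keeping $B_1^2$ and completing the square so that the $(A_1,B_1)$-form is nonnegative for every $B_1$ costs you the $A_1^2$ coefficient. For fixed $X=U\Delta u\,A_1$,
\[
\min_{B_1}\Bigl(3R_1X^2+6S_1XB_1+3(n+2)B_1^2\Bigr)=\frac{3}{n+2}\Bigl((n+2)R_1-S_1^2\Bigr)X^2 ,
\]
so the best bound free of $B_1$ is $\min Q_1\ge \frac{3}{n+2}(U\Delta u)^2A_1^2$. This is exactly the minimum of $Q_1$ under the single constraint $\sum_j t_j=-U\Delta u A_1$. In other words, you have discarded the differentiated equation, which is the entire source of the gain. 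You then get only $\frac{1}{U\Delta u}Q_1-U\Delta u A_1^2\ge-\frac{n-1}{n+2}U\Delta u\,A_1^2$, instead of $\ge U\Delta u(\tilde Q_1-1)A_1^2\ge(1+\epsilon_1)Uf_1A_1^2$. Nothing then cancels $-(1+U)f_1A_1^2$, and the term $\frac{\epsilon_1}{2}Uf_1A_1^2$ on which Case 1 rests is lost.

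\textbf{The obstacle itself comes from a bookkeeping slip.} Relative to $\frac{\epsilon_1}{2}Uf_jA_j^2$, your cross-term error carries a spurious factor $U$. In \eqref{js-7'} the $Q_i$ enter as $\frac{1}{U\Delta u}Q_i$, so the cross term contributes $\frac{6S_iA_iB_i}{(n+2)R_i-S_i^2}$, with no $U$. Three bounds control it:
\begin{itemize}
\item $S_i\le C\Delta u$;
\item $(n+2)R_i-S_i^2\ge c\Delta u^2$, because $2nt^2-4(n-1)t+3(n-1)$ has negative discriminant;
\item $|B_i|\le C(1+|\lambda_i|)\le C\Delta u$ under \eqref{js-condition-1}.
\end{itemize}
Hence the cross term is at most $C|A_i|\le C\alpha/\rho+C(a+b)\Delta u$. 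This is exactly what the paper absorbs:
\begin{itemize}
\item in Case 1, by $\frac{\epsilon_1}{2}Uf_1A_1^2\ge cUa^2f_1\lambda_1^2\ge cUa^2\Delta u$ once $U$ is large;
\item in Case 2, by $bf_j\lambda_j^2$ or by $\frac{\epsilon_1}{2}Uf_jA_j^2$;
\item in the case $\lambda_n\le-\frac{c_n}{2}\Delta u$, by $\frac b2 f_n\lambda_n^2\gtrsim b\Delta u^3$. There is no $\epsilon_1$-gain there, so your Cauchy--Schwarz absorption into $\frac{\epsilon_1}{2}Uf_jA_j^2$ is not available in that case anyway.
\end{itemize}

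\textbf{Your Cauchy--Schwarz route does close once the $U$ is removed.} The leftover is $\frac{C\lambda_1^2}{Uf_1\Delta u^2}\lesssim \frac{C\Delta u}{U}$. This is of the same type as the $-C\Delta u/U$ already coming from \eqref{js-4'}. Absorbing it needs $U$ large, not merely $\Delta u$ large (you wrote ``$\Delta u$ large'' for the analogous term earlier), and you may assume $U$ large by taking $\gamma$ large. So no information about $B_1$ beyond $|B_1|\le C\Delta u$ is needed, and the $B_1^2$ term can simply be dropped.

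\textbf{A minor inaccuracy.} $\tilde Q_j$ is not free of $\psi$, since $\sum_l f_l^2=(n-1)\Delta u^2-2\psi$. The resulting corrections have a favorable sign, so this does not affect the argument.
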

We consider the test function in $B_3(0)$,
\[
W = \rho^\alpha \exp\left\{a (x\cdot \nabla u - u) + b \frac{|\nabla u|^2}{2}\right\} \max\left\{\log \frac{\Delta u}{M_1}, \gamma\right\},
\]
where $M_1 = \sup_{B_{1} (0)} \Delta u$, $\rho (x) = 3^2- |x|^2$, $\alpha$, $a$, $b$ and $\gamma \geq 2$ are positive constants to be chosen.
Suppose $W$ achieves its maximum at $x_0 \in B_3$. We may assume $\Delta u (x_0) \geq e^{\gamma+1} M_1$ for otherwise we are done.
We may also assume that $D^2 u (x_0) = \lambda_i \delta_{ij}$ is diagonal and
\[
\lambda_1 \geq \cdots \geq \lambda_n.
\]
Let
\[
F^{ij} (x) = \frac{\partial \sigma_2}{\partial u_{ij}}(D^2 u (x)).
\]
Then $F^{ij} (x_0) = f_{i} \delta_{ij}$ is also diagonal at $x_0$ and furthermore,
\[
0 < f_1 \leq \cdots \leq f_n.
\]
We note that
\[
f_i = \sum_{l\neq i} \lambda_l = \Delta u - \lambda_i \mbox{ for each }i.
\]
By differentiating the equation \eqref{Hessian'} twice, we get
\begin{equation}
\label{js-3'}
F^{ij} u_{ijl} = \psi_l + \psi_u u_l + \psi_{u_s} u_{sl}, \ \ \mbox{ for each } l = 1, \ldots, n
\end{equation}
and
\begin{equation}
\label{js-4'}
F^{ij} \Delta u_{ij} + \sum_{l=1}^n F^{ij, pq} u_{ijl} u_{pql}\geq - C \Delta u^2,
\end{equation}
where
\[
F^{ij, pq} = \frac{\partial^2 \sigma_2}{\partial u_{ij} \partial u_{pq}} (D^2 u).
\]
As $\Delta u (x_0) \geq e^{\gamma+1} M_1$, the function
\[
\alpha \ln \rho + a(x\cdot \nabla u - u) + b \frac{|\nabla u|^2}{2} + \log \log \frac{\Delta u}{M_1}
\]
also attains its local maximum at $x_0$. For simplicity, let $U := \log\Delta u - \log M_1$. We have, at $x_0$,
\begin{equation}
\label{js-1'}
\frac{\Delta u_i}{U \Delta u} + \alpha \frac{\rho_i}{\rho} + a x_i u_{ii} + b u_i u_{ii} = 0, \ \ \mbox{ for } i = 1, \ldots, n
\end{equation}
and
\begin{equation}
\label{js-2'}
\begin{aligned}
0 \geq & f_i \left(\frac{\Delta u_{ii}}{U \Delta u} - (1+ U)\frac{(\Delta u_i)^2}{(U \Delta u)^2} \right.\\
  & \left. + a \lambda_i + b \lambda_i^2
+ (ax_k + b u_k) u_{kii} + \alpha \frac{\rho_{ii}}{\rho} - \alpha \frac{\rho_i^2}{\rho^2}\right)\\
\geq & f_i \left(\frac{\Delta u_{ii}}{U \Delta u} - (1+U)\frac{(\Delta u_i)^2}{(U \Delta u)^2} + \alpha \frac{\rho_{ii}}{\rho} - \alpha \frac{\rho_i^2}{\rho^2}\right)
  + 2a + b f_i \lambda_i^2\\
  & - C (a+b) \Delta u.
   % = f_i \left(\frac{\Delta u_{ii}}{\Delta u} + \frac{\rho_{ii}}{\rho} - 2\frac{\rho_i^2}{\rho^2}\right).
\end{aligned}
\end{equation}
%We first consider the case that there exists $j \geq 2$ such that $|\lambda_j| \geq \delta \Delta u$, where $\delta$ is a positive constant sufficiently small
%to be determined later. By \eqref{js-4}, \eqref{js-18}, \eqref{js-1} and \eqref{js-2} we have
%\[
%0 \geq f_i \left[\alpha \frac{\rho_{ii}}{\rho} - \left(\alpha + 3\alpha^2 (1+\log U)\right) \frac{\rho_i^2}{\rho^2}\right]
% + (b-3b^2 u_i^2 - 3a^2 x_i^2) f_i \lambda_i^2.
%   % = f_i \left(\frac{\Delta u_{ii}}{\Delta u} + \frac{\rho_{ii}}{\rho} - 2\frac{\rho_i^2}{\rho^2}\right).
%\]
%Assuming $12a^2 < b < \frac{1}{12\sup_{B_1} |\nabla u|^2}$, by \eqref{SY-eq1}, we get
%\begin{equation}
%\label{js-17}
%0 \geq f_i \left(\alpha \frac{\rho_{ii}}{\rho} - (3 \alpha^2 + \alpha) \frac{\rho_i^2}{\rho^2}\right)
% + \frac{b c_0 \delta^2}{2} \Delta u^2 \sum f_i,
%\end{equation}
%where $c_0$ is positive constant depending only on $n$. Then we obtain a bound of $\rho\Delta u$.
%
%Thus, without loss of generality, we assume that $|\lambda_j| < \delta \Delta u$ for all $j \geq 2$.

We note that
\begin{equation} \label{js-5}
\begin{aligned}
- \sum_{pqrs} F^{pq, rs} u_{pqj} u_{rsj}  = & \sum\limits_{p \neq q} \frac{f_p - f_q}{\lambda_q - \lambda_p} u_{pqj}^2 - \sum\limits_{pq} \frac{\partial^2 f}{\partial \lambda_p \partial \lambda_q} u_{ppj} u_{qqj} \\
\geq  & 2 \sum\limits_{p \neq j} \frac{f_p - f_j}{\lambda_j - \lambda_p} u_{pjj}^2 - \sum\limits_{pq} \frac{\partial^2 f}{\partial \lambda_p \partial \lambda_q} u_{ppj} u_{qqj}\\
 = & 2 \sum\limits_{p \neq j} u_{pjj}^2 - \sum\limits_{p\neq q} u_{ppj} u_{qqj}.
\end{aligned}
\end{equation}
By \eqref{js-1} we realize that
\begin{equation}
\label{js-6}
 \sum\limits_{p \neq q} u_{ppj} u_{qqj}
= (U \Delta u)^2 A_j^2 - \sum\limits_p u_{ppj}^2,
\end{equation}
where $A_j$ is defined by
\[
A_j = \alpha \frac{\rho_j}{\rho} + a x_j \lambda_{j} + b u_j \lambda_{j}, \ \ j = 1, \ldots, n.
\]
Combining \eqref{js-4}, \eqref{js-1}, \eqref{js-2}, \eqref{js-5} and \eqref{js-6}, we obtain
\begin{equation}
\label{js-7'}
\begin{aligned}
0 \geq & \alpha f_i \left(\frac{\rho_{ii}}{\rho} - \frac{\rho_i^2}{\rho^2}\right) - (1+U)f_i A_i^2 - U \Delta u \sum_{i=1}^n A_i^2
  + \frac{1}{U \Delta u}\sum_{i=1}^n Q_i\\
 & + b f_i \lambda_i^2 - C (a+b) \Delta u - \frac{C \Delta u}{U}\\
% \geq & \alpha f_i \left(\frac{\rho_{ii}}{\rho} - \frac{\rho_i^2}{\rho^2}\right) - \left(3-\frac{2}{n}\right)\Delta u \sum_{i=2}^n A_i^2
%    - [1+(n-1)\delta] \Delta u A_1^2\\
%  & + \frac{1}{\Delta u}\sum_{i=1}^n Q_i
%  + b \sum_{i=1}^n f_i \lambda_i^2,
\end{aligned}
\end{equation}
where
\[
Q_i = 2 \sum\limits_{p \neq i} u_{pii}^2 + \sum\limits_p u_{ppi}^2, \ \ i = 1, \ldots, n.
\]
%and we have used the fact
%\[
%f_1 = \sum_{i=2}^n \lambda_i \leq (n-1) \delta
%\]
%in the last inequality of \eqref{js-7}.
For each $i = 1, \ldots, n$, we shall minimize the quantity $Q_i$ under conditions \eqref{js-3} and \eqref{js-1}.
It suffices to minimize the function $L_i$ in terms of $t_1$, \ldots, $t_n$, $\mu_1$, $\mu_2$.
\[
L_i =  2 \sum\limits_{j \neq i} t_j^2 + \sum\limits_j t_j^2
- \mu_1 \left(\sum_j f_j t_j - B_i\right) - \mu_2 \left(\sum_j t_j + (U \Delta u) A_i\right),
\]
where
\[
B_i = \psi_i + \psi_u u_i + \psi_{u_s} u_{si}.
\]
To find the critical points of $L_i$, we solve the following linear equations.
\begin{equation} \label{js-8'}
\begin{aligned}
& \frac{\partial L_i}{\partial t_j} = 6 t_j - \mu_1 f_j - \mu_2 = 0, \qquad j \neq i, \qquad \qquad  & (j) \\
& \frac{\partial L_i}{\partial t_i} = 2 t_i - \mu_1 f_i - \mu_2 = 0,  \qquad \qquad & (i) \\
& \sum_j f_j t_j = B_i,  \qquad \qquad & (c) \\
& \sum_j t_j = - (U \Delta u) A_i.  \qquad \qquad & (d)
\end{aligned}
\end{equation}
As in Section 3, by calculations, we obtain
\begin{equation}
\label{js-11'}
\min Q_i = \frac{3R_i(U \Delta u)^2A_i^2 + 3(n+2) B_i^2
 + 6 S_i (U \Delta u)A_i B_i}{(n + 2)R_i - S_i^2},
\end{equation}
where
\[
\begin{aligned}
R_i = \,& \sum_{l \neq i} f_l^2 + 3 f_i^2 = \sum_{l=1}^n f_l^2 + 2 f_i^2,\\
S_i = \,& \sum_{l \neq i} f_l + 3 f_i = \sum_{l=1}^n f_l + 2 f_i.
\end{aligned}
\]
For the $2$-Hessian equation, we see
\[
f_j = \Delta u - \lambda_j, \mbox{ for } j = 1,\ldots, n.
\]
Let
\[
\tilde{Q}_i = \frac{3 R_i}{(n+2) R_i - S_i^2}.
\]
By calculations, we have
\begin{equation}
\label{js-9'}
\begin{aligned}
\tilde{Q}_i - 1 = \,& \frac{4(n-1)\Delta u f_i - 2 (n-3) f_i^2 + 2 (n-1)}{3(n-1) \Delta u^2 - 4(n-1) \Delta u f_i + 2n f_i^2 - 2(n+2)}\\
 = \,& \frac{f_i}{\Delta u}\frac{4(n-1)\Delta u - 2 (n-3) f_i + \frac{2 (n-1)}{f_i}}{3(n-1) \Delta u - 4(n-1) f_i + 2n \frac{f_i^2}{\Delta u}
  - \frac{2(n+2)}{\Delta u}}\\
  = \,& \frac{f_i}{\Delta u} \left(1 + \frac{(n-1)\Delta u + 2 (n+1) f_i - 2n \frac{f_i^2}{\Delta u} + \frac{2 (n-1)}{f_i} + \frac{2(n+2)}{\Delta u}}{3(n-1) \Delta u - 4(n-1) f_i + 2n \frac{f_i^2}{\Delta u}
  - \frac{2(n+2)}{\Delta u}}\right).
\end{aligned}
\end{equation}
Since $\lambda_n \geq -c_n \Delta u$, we have $f_i \leq f_n \leq (1+c_n) \Delta u$
\begin{equation}
\label{js-10'}
(n-1)\Delta u + 2 (n+1) f_i - 2n \frac{f_i^2}{\Delta u} \geq 0
\end{equation}
Next, we see
\begin{equation}
\label{js-12'}
\frac{6 S_i (U \Delta u)|A_i B_i|}{(n + 2)R_i - S_i^2} \leq C_0 U \Delta u |A_i|
\end{equation}
for some constant $C_0$ depending only on $n$, $\|\psi\|_{C^1}$ and $\|u\|_{C^1(B_3 (0))}$.

We first consider the case $-\frac{c_n}{2} \Delta u \geq \lambda_n \geq - c_n \Delta u$.
By \eqref{js-7'}, \eqref{js-11'}, \eqref{js-9'}, \eqref{js-10'} and \eqref{js-12'}, we obtain
\begin{equation}
\label{js-13'}
\begin{aligned}
0 \geq & \alpha f_i \left(\frac{\rho_{ii}}{\rho} - \frac{\rho_i^2}{\rho^2}\right) - (1+U)f_i A_i^2 - U \Delta u \sum_{i=1}^n A_i^2
  + \frac{1}{U \Delta u}\sum_{i=1}^n Q_i\\
 & + b f_i \lambda_i^2 - C (a+b) \Delta u - \frac{C \Delta u}{U}\\
\geq \,& \alpha f_i \left(\frac{\rho_{ii}}{\rho} - \frac{\rho_i^2}{\rho^2}\right) - f_i \left(\frac{\alpha \rho_i}{\rho} + a x_i \lambda_i + b u_i \lambda_i\right)^2 + b f_i \lambda_i^2\\
 & - \frac{C\alpha}{\rho} - C (a+b) \Delta u - \frac{C \Delta u}{U}\\
 \geq \,& \frac{b}{2} f_i \lambda_i^2 - \frac{C(\alpha+\alpha^2)}{\rho^2}\left(1+\sum f_i\right) - C (a+b) \Delta u\\
 \geq \,& \frac{bc_n^2}{8n} \Delta u^2 \sum f_i - \frac{C(\alpha+\alpha^2)}{\rho^2}\left(1+\sum f_i\right) - C (a+b) \Delta u
\end{aligned}
\end{equation}
by assuming $a^2 \ll b \ll 1$. Thus, a bound of $\rho\Delta u$ is derived as in Section 3.

For the case $\lambda_n \geq -\frac{c_n}{2} \Delta u$, we have
\[
(n-1)\Delta u + 2 (n+1) f_i - 2n \frac{f_i^2}{\Delta u} \geq \epsilon_0 \Delta u
\]
and as in Section 3,
\begin{equation}
\label{js-26'}
\begin{aligned}
0 \geq \,& \alpha f_i \left(\frac{\rho_{ii}}{\rho} - \frac{\rho_i^2}{\rho^2}\right) - f_i A_i^2 + \epsilon_1 U f_i A_i^2 + b f_i \lambda_i^2\\
 & - \sum_{i=1}^n C_0 |A_i|
  - C (a+b) \Delta u - \frac{C \Delta u}{U}\\
 \geq \,& \alpha f_i \left(\frac{\rho_{ii}}{\rho} - \frac{\rho_i^2}{\rho^2}\right) + \frac{\epsilon_1}{2} U f_i A_i^2+ b f_i \lambda_i^2\\
 & - \frac{C\alpha}{\rho} - C (a+b) \Delta u - \frac{C \Delta u}{U}\\
 \geq \,& \frac{\epsilon_1}{2} U f_i A_i^2+ b f_i \lambda_i^2 - C (a+b) \Delta u - \frac{C\alpha}{\rho^2} \left(1+\sum f_i\right).
\end{aligned}
\end{equation}
We note that the maximum of $W$ is achieved at a point in $B_2-B_1$. Thus, there exists an index
$1\leq j \leq n$ such that $x_j^2 \geq \frac{1}{n}$. Finally we consider two cases $x_1^2 \geq \frac{1}{n}$
and $x_j^2 \geq \frac{1}{n}$ for some $j \geq 2$ to obtain a bound of $\rho U$ by choosing $a^2 \ll b \ll a \ll 1 \leq \alpha$
as in Section 3 with only a few modifications.

\end{document}